\newtheorem{theorem}{Theorem}
\newtheorem*{theorem*}{Theorem}
\newtheorem{lemma}[theorem]{Lemma}
\newtheorem{proposition}[theorem]{Proposition}
\newtheorem{corollary}[theorem]{Corollary}
\newtheorem{remark}[theorem]{Remark}
\newtheorem*{question*}{Question}
\newtheorem*{conjecture*}{Conjecture}
\DeclareMathOperator{\D}{D}
\DeclareMathOperator{\conv}{conv}
\DeclareMathOperator{\vol}{vol}
\DeclareMathOperator{\card}{card}
\DeclareMathOperator{\aff}{aff}
\begin{document}

\title[A Geometric Solution to a Maximin Problem Involving Determinants]{A Geometric Solution to a Maximin Problem Involving Determinants of Sets of Unit Vectors in Finite Dimensional Real or Complex Vector Spaces}

\author{Mark Fincher}
\address{Department of Mathematics, University of Pittsburgh, 
Pittsburgh, PA 15213}
\email{MEF98@pitt.edu}

\begin{abstract}
Given $n+1$ unit vectors in $\mathbf{R}^n$ or $\mathbf{C}^n,$ consider the absolute values of the determinants of the vectors taken $n$ at a time. By taking a geometric perspective, we show that the minimum of these determinants is maximized when the vectors point from the origin to the vertices of a regular simplex inscribed in the unit sphere in $\mathbf{R}^n,$ even in the complex case. We also discuss variations on this problem and a few connections to other problems.
\end{abstract}

\maketitle

\section{Introduction} 

Let $\mathcal{V}$ be an $n$-dimensional real or complex vector space. Let $V = \{w_0,\dots,w_k\}$ be a set of $k+1 > n$ distinct unit vectors in $\mathcal{V}.$ Let
$$\D(V) = \min\{|\det(w_{j_1},w_{j_2},\dots,w_{j_n})| : 0 \leq j_1 < j_2 < \dots < j_n \leq k\}.$$
For some given $k,$ consider the problem of finding optimal sets $V$ so that $\D(V)$ is as large as possible. What is the largest $\D(V),$ and how can one describe the optimal sets? In general, we call this the \textit{maximin determinants problem for $k+1$ unit vectors in $\mathcal{V}$}. 

In this paper, we solve the problem for when $n$ is arbitrary, $k=n,$ and $\mathcal{V}$ is $\mathbf{R}^n$ or $\mathbf{C}^n.$ The solution for the complex case involves considering the underlying real vector space of double the dimension and looking at the polytopes with $2n+2$ vertices with maximal volume over all such polytopes inscribed in the unit sphere in $2n$-dimensional Euclidean spaces. The description of these polytopes was recently given by a result of Horv\'ath and L\'angi in \cite{HorvathLangi}. The result for the real case will be a simple corollary.

Working up to this, in section \ref{motivating example} we look informally at $3$ unit vectors in $\mathbf{R}^2.$ There, we motivate a kind of geometric argument similar to the argument later used for $n+1$ unit vectors in $\mathbf{C}^n.$ It is a good idea to have this simple case in mind before going into the general complex case.

In section \ref{preliminaries}, the pertinent definitions and facts to be used concerning Euclidean geometry of arbitrary dimension are provided. In particular, we define simplices, explain some of their properties, and give the aforementioned result of Horv\'ath and L\'angi.

Next, in section \ref{main result section} we prove the main result concerning $n+1$ unit vectors in $\mathbf{C}^n.$ The corollary for $\mathbf{R}^n$ is then given.

Finally, in the last section we provide some motivation for studying the maximin determinants problem, give the solution for any $k+1$ unit vectors in $\mathbf{R}^2,$ and discuss further cases.

\section{A Look at $3$ Vectors in $\mathbf{R}^2$} \label{motivating example}

The maximin determinants problem for $k+1$ unit vectors in $\mathbf{R}^2$ is easy for arbitrary $k+1>2.$ The general solution is given in section \ref{extras}. However, there is some value in looking at the specific case of $3$ vectors in $\mathbf{R}^2,$ because we can develop a useful idea. This section is for motivating purposes and is not intended to be formal.

Let $V = \{w_0,w_1,w_2\}$ be a set of $3$ unit vectors in $\mathbf{R}^2$ which form a triangle containing the origin when lines are drawn connecting the tips of the vectors. This triangle is inscribed in the unit circle, and might look as in figure \ref{figure1}.

\begin{figure}
	\centering
	\includegraphics[width=0.6\textwidth]{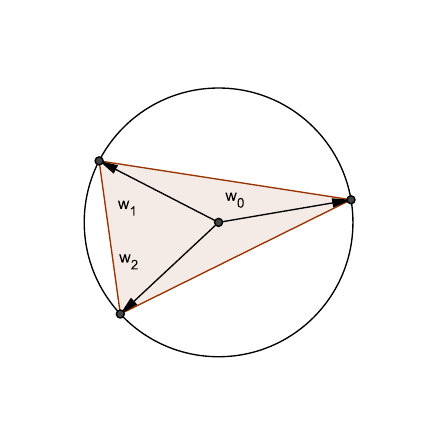}
	\caption{}\label{figure1}	
\end{figure}

The triangle with vertices at $w_0,w_1,$ and $w_2$ is partitioned into three smaller triangles, each with one vertex at the origin. Hence, we can write the area of the larger triangle in terms of the sum of the areas of the smaller ones, which can be written in terms of determinants.
$$\text{Area}_V = \frac{1}{2}(|\det(w_0,w_1)|+|\det(w_0,w_2)|+|\det(w_1,w_2)|).$$
Let $R = \{v_0,v_1,v_2\}$ be a set of unit vectors with tips at the vertices of an equilateral triangle inscribed in the unit circle. Such a triangle contains the origin and hence we can write its area as
$$\text{Area}_R = \frac{1}{2}(|\det(v_0,v_1)|+|\det(v_0,v_2)|+|\det(v_1,v_2)|).$$
Since the vectors in $R$ form an equilateral triangle, we have
$$|\det(v_0,v_1)|=|\det(v_0,v_2)|=|\det(v_1,v_2)|=\D(R).$$
Furthermore, it is a simple fact that the equilateral triangle has the greatest area of all triangles which can be inscribed in a circle, giving us
$$\text{Area}_V \leq \text{Area}_R,$$
and hence
$$\frac{1}{2}(|\det(w_0,w_1)|+|\det(w_0,w_2)|+|\det(w_1,w_2)|) \leq \frac{3}{2}\D(R).$$
Of course, this means
$$\D(V) \leq \D(R).$$
It seems to then be the case that an optimal configuration for maximizing the minimum determinant could be when the three vectors point to the vertices of an equilateral triangle. All that is left to check is what $\D(V)$ could be if $w_0,w_1,$ and $w_2$ determine a triangle which does not contain the origin, like in figure \ref{figure2}.

\begin{figure}
	\centering
	\includegraphics[width=0.6\textwidth]{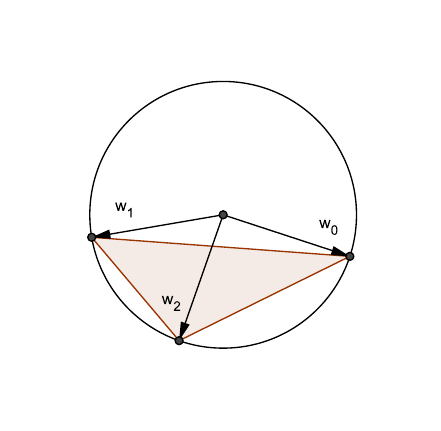}
	\caption{}\label{figure2}	
\end{figure}

In this case, we cannot write the area of the triangle in terms of the sum of the absolute values of the determinants. However, we can try to cook up another set of $3$ unit vectors, call it $\tilde{V},$ such that the triangle formed by the vectors of $\tilde{V}$ contains the origin and $\D(V)=\D(\tilde{V}).$ In this situation, let $\tilde{V}=\{w_0,w_1,-w_2\}.$ The triangle formed by these vectors does contain the origin, so by the previous reasoning
$$\D(\tilde{V}) \leq \D(R).$$
Clearly $\D(V)=\D(\tilde{V}),$ so $\D(V) \leq \D(R).$

In conclusion, it seems like an optimal configuration is when the three vectors point to the vertices of an equilateral triangle. Of course this is not the only optimal configuration, since any transformation of these vectors which preserves the absolute values of the determinants will also be optimal (for instance, multiplying some of the vectors by $-1$).

Let us summarize the few critical facts which made this argument work. Firstly, when the triangle with vertices determined by $V$ contains the origin, we can express its area in terms of the sum of the absolute values of the determinants of $V.$ Secondly, there exists a set of unit vectors $R$ determining a triangle with largest area such that the origin is inside this triangle and the determinants of the vectors of $R$ are equal in magnitude. Thirdly, if the triangle determined by $V$ does not contain the origin, then we can find another set $\tilde{V}$ so that its triangle does contain the origin and $\D(V)=\D(\tilde{V}).$

We will rely on generalized versions of these facts later.

\section{Preliminaries} \label{preliminaries}

\subsection{Simplices}

For $V = \{w_0,\dots,w_k\} \subset \mathbf{R}^n,$ with possibly $k<n,$ define the \textit{convex hull} of $V$ as
$$\conv(V) = \{c_0w_0 + \dots + c_kw_k : \text{ each } c_j \geq 0 \text{ and } \sum_{j=0}^{k} c_j =1\}.$$
We view $\conv(V)$ as a polytope with vertices at the tips of some or all of the vectors of $V.$ For instance, if the $w_j$ are unit vectors, then $\conv(V)$ is a polytope with vertices at the tips of each of the $w_j.$ For convenience, we sometimes refer to the vectors $w_j$ as the vertices of $\conv(V).$ We let \textit{face} and \textit{edge} have their normal meaning for polytopes. The unit sphere in $\mathbf{R}^n$ with center at $0$ we denote $S^{n-1}$.

We say $V$ is an \textit{affinely independent} set if $$\{w_1 - w_0,\dots,w_k - w_0\}$$ is a linearly independent set, and we call $V$ an \textit{affinely dependent} set otherwise. If $V$ is affinely independent, then $\conv(V)$ is a \textit{k-simplex} with vertices at each $w_j$, or just a simplex.

Let $w_j \cdot w_{\ell}$ mean the standard dot product of $w_j$ and $w_{\ell}.$ The \textit{edge length} between $w_j$ and $w_\ell$ is defined to be $\sqrt{(w_j - w_\ell)\cdot (w_j - w_\ell)}.$ For $\nu \in \mathbf{R}^n,$ not equal to the zero vector, and $c \in \mathbf{R},$ we call $\{x \in \mathbf{R}^n : x\cdot \nu = c\}$ an $(n-1)$-dimensional \textit{hyperplane}.

We let $\vol(\conv(V))$ mean the volume of $\conv(V),$ where the dimension of the volume is stated or should be clear from the context. We will make use of the following well-known formula for the volume of a simplex.

\begin{proposition}[{\cite[\S8.4]{Sommerville}}] \label{simplex volume formula}
	Let $V = \{w_0, \ldots, w_k\} \subset \mathbf{R}^n$ such that $\conv(V)$ is a $k$-simplex. The $k$-dimensional volume of $\conv(V)$ is given by
	$$\vol(\conv(V)) = \frac{1}{k!}|\det(w_1-w_0,\dots,w_k-w_0)|.$$
\end{proposition}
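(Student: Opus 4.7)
My plan is to reduce to the case $w_0 = 0$ and then realize the simplex as the image of a standard model under a linear map. Since translation leaves both sides of the formula unchanged (the left by invariance of volume, the right by its dependence only on differences of the $w_j$), I may assume $w_0 = 0$; writing $v_j = w_j$ for $j \geq 1$, affine independence of $V$ is the same as linear independence of $\{v_1, \ldots, v_k\}$.

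The heart of the argument is the full-dimensional case $k = n$. Let $T \colon \mathbf{R}^n \to \mathbf{R}^n$ be the linear map with $T(e_j) = v_j$, where $e_1, \ldots, e_n$ is the standard basis. Then $T$ carries the standard simplex $\Delta_n = \conv(\{0, e_1, \ldots, e_n\})$ onto $\conv(V)$, and since linear maps scale $n$-dimensional volume by the factor $|\det T| = |\det(v_1, \ldots, v_n)|$, it suffices to verify $\vol(\Delta_n) = 1/n!$. I would establish this by induction on $n$, slicing $\Delta_n$ at height $x_n = t$ to obtain a scaled copy of $\Delta_{n-1}$ and integrating, giving
$$\vol(\Delta_n) = \int_0^1 \frac{(1-t)^{n-1}}{(n-1)!}\, dt = \frac{1}{n!}.$$

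For the general case $k < n$, I would pass to the $k$-dimensional subspace $W = \operatorname{span}(v_1, \ldots, v_k)$ and pick an orthonormal basis $u_1, \ldots, u_k$ for it. Letting $a_{ij} = v_j \cdot u_i$, the simplex $\conv(\{0, v_1, \ldots, v_k\})$ is isometric to its coordinate image in $\mathbf{R}^k$, so the $k=n$ case applied inside $W$ gives $k$-dimensional volume $\frac{1}{k!}|\det(a_{ij})|$. Writing $M$ for the $n \times k$ matrix with columns $v_j$ and $A = (a_{ij})$, one checks $M^T M = A^T A$, so $|\det A| = \sqrt{\det(M^T M)}$, which matches the standard Gram-determinant interpretation of $|\det(v_1, \ldots, v_k)|$ when $k < n$.

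The only real subtlety is pinning down the meaning of $|\det(v_1, \ldots, v_k)|$ when $k < n$; once the Gram-determinant convention is adopted, the reduction to the $k = n$ case is essentially mechanical, and the entire proof rests on the behavior of volume under linear maps together with the evaluation $\vol(\Delta_n) = 1/n!$.
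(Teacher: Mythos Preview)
The paper does not supply a proof of this proposition at all; it is stated with a citation to Sommerville and treated as a standard background fact, so there is nothing to compare against.

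Your argument is correct. The translation reduction, the identification of $\conv(V)$ as a linear image of the standard simplex $\Delta_n$, the inductive evaluation $\vol(\Delta_n)=1/n!$ by slicing, and the passage to an orthonormal basis of $W$ in the non-full-dimensional case are all sound and standard. The one point worth making explicit is exactly the subtlety you already flag: when $k<n$ the symbol $|\det(w_1-w_0,\dots,w_k-w_0)|$ has to be read as $\sqrt{\det(M^{T}M)}$ for the $n\times k$ matrix $M$ with those columns, since an ordinary determinant is undefined for a non-square array. The paper never spells out this convention, but in fact every application of the formula in the paper (notably in Lemma~\ref{ptopevolumeformula}) is in the full-dimensional case $k=n$, so the issue does not arise there.
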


A \textit{regular simplex} is a simplex with all edge lengths equal.

\begin{theorem}[{\cite[Cor.~1]{HorvathLangi}}] \label{regmaxvolume}
	If $P$ is an $n$-simplex inscribed in $S^{n-1}$ with maximal volume over all $n$-simplices inscribed in $S^{n-1}$, then $P$ is a regular $n$-simplex. 
\end{theorem}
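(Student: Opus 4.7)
My plan is to prove this via a vertex-by-vertex perturbation argument built on the volume formula of Proposition \ref{simplex volume formula}. By compactness of $(S^{n-1})^{n+1}$ and continuity of the volume functional, a maximizer $P = \conv\{v_0, \dots, v_n\}$ exists; since any degenerate configuration has volume zero, this maximizer must be a genuine (nondegenerate) simplex.

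The key step is to freeze all but one vertex at a time. Write $H_i = \aff\{v_j : j \neq i\}$ for the affine hyperplane spanned by the other vertices and $F_i$ for the opposite facet. The standard base-times-height decomposition gives $\vol(P) = \tfrac{1}{n}\vol_{n-1}(F_i)\,d(v_i, H_i)$, so with all other vertices fixed, maximizing $\vol(P)$ amounts to maximizing $d(v_i, H_i)$ over $v_i \in S^{n-1}$. A short Cauchy--Schwarz argument applied to the signed distance functional for $H_i$ shows that the maximum is attained precisely on the line through the origin normal to $H_i$. Hence $v_i$ is orthogonal to the linear subspace parallel to $H_i$, so $v_i \cdot (v_j - v_k) = 0$ for all $j, k \neq i$.

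This means there is a scalar $\alpha_i$ with $v_i \cdot v_j = \alpha_i$ for every $j \neq i$. The symmetry of the dot product then forces $\alpha_i = v_i \cdot v_j = v_j \cdot v_i = \alpha_j$, so a single constant $\alpha$ satisfies $v_i \cdot v_j = \alpha$ for all distinct pairs $i, j$. The squared edge lengths $|v_i - v_j|^2 = 2 - 2\alpha$ are therefore all equal, and $P$ is a regular simplex, as claimed.

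The only step requiring mild care is the freeze-one-vertex reduction: I must verify that $F_i$ is a genuine $(n-1)$-simplex (so that $\vol_{n-1}(F_i) > 0$ and the base-times-height identity is nontrivial), which follows from nondegeneracy of $P$, and I must check that the maximizer of $d(\,\cdot\,, H_i)$ on $S^{n-1}$ really is $\pm \nu$, the unit normal, regardless of which side of $H_i$ contains the origin. Both are brief verifications, and I do not anticipate any serious obstruction beyond being careful with the case when $H_i$ passes through the origin, where the maximizer is nonunique but still lies on the normal line, which is all the conclusion needs.
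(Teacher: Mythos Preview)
Your argument is correct. The paper does not supply its own proof of this statement: Theorem~\ref{regmaxvolume} is quoted directly from \cite[Cor.~1]{HorvathLangi} and left unproved, so there is no in-paper argument to compare against at the level of technique.

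What you have written is the classical first-variation proof: at a maximizer, perturbing any single vertex along $S^{n-1}$ cannot increase the volume, so each $v_i$ must be a point of $S^{n-1}$ farthest from the hyperplane $H_i$ of the opposite facet; the signed-distance computation forces $v_i=\pm\nu$ for the unit normal $\nu$ to $H_i$, whence all off-diagonal inner products $v_i\cdot v_j$ coincide and the simplex is regular. This is more elementary and fully self-contained than deferring to \cite{HorvathLangi}, where the corollary is extracted from a broader treatment of maximal-volume polytopes. Your handling of the borderline case $0\in H_i$ is adequate, since the conclusion only requires the maximizer to lie on the normal line through the origin, not that it be unique.
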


\begin{proposition} \label{regdetsformula}
	When $v_0,\dots,v_n \in \mathbf{R}^n$ are unit vectors and lie at the vertices of a regular simplex, then $$|\det(v_{j_1},\dots,v_{j_n})|=\sqrt{\frac{(n+1)^{n-1}}{n^n}}$$ for all $0\leq j_1<\dots <j_n\leq n$.
\end{proposition}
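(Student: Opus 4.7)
The plan is to compute $|\det(v_{j_1},\dots,v_{j_n})|^2$ via the Gram determinant, since for any $n$ vectors $u_1,\dots,u_n \in \mathbf{R}^n$ we have $|\det(u_1,\dots,u_n)|^2 = \det(G)$, where $G$ is the $n\times n$ Gram matrix with entries $G_{ab} = u_a \cdot u_b$. The advantage is that the Gram matrix depends only on pairwise inner products, which are easy to pin down for a regular simplex by symmetry.

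First I would show that if $v_0,\dots,v_n$ are unit vectors at the vertices of a regular simplex, then $v_i \cdot v_j = -1/n$ for all $i \neq j$. Regularity says $|v_i - v_j|^2$ is constant over $i \neq j$, and since $|v_i| = |v_j| = 1$, the inner product $v_i \cdot v_j$ must be constant as well; call it $c$. Then the full $(n+1)\times(n+1)$ Gram matrix $[v_i \cdot v_j]$ equals $(1-c)I + cJ$, where $J$ is the all-ones matrix. This matrix has rank at most $n$ because the $v_i$ lie in $\mathbf{R}^n$, so it is singular. Its eigenvalues are $1-c$ (with multiplicity $n$, from vectors orthogonal to $(1,\dots,1)$) and $1 + nc$ (once, from $(1,\dots,1)$). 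Since $c = 1$ would force all vectors equal, we must have $1 + nc = 0$, i.e., $c = -1/n$.

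Next, for any choice of $n$ indices $j_1 < \dots < j_n$, the $n\times n$ Gram matrix $G$ of $\{v_{j_1},\dots,v_{j_n}\}$ has ones on the diagonal and $-1/n$ off the diagonal, i.e., $G = \frac{n+1}{n}I - \frac{1}{n}J$. Its eigenvalues are $\frac{n+1}{n} - 1 = \frac{1}{n}$ (once, on the span of $(1,\dots,1)$) and $\frac{n+1}{n}$ (with multiplicity $n-1$, on the orthogonal complement). Hence
\[
\det(G) = \frac{1}{n}\cdot\left(\frac{n+1}{n}\right)^{n-1} = \frac{(n+1)^{n-1}}{n^n},
\]
and taking square roots gives the claimed formula. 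Note the value is independent of the choice of indices, as is forced by the symmetry of the regular simplex.

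There is no real obstacle here: the content is the determination $c = -1/n$ and the eigenvalue calculation for a rank-one update of a multiple of the identity. Both are standard. The only subtlety worth flagging is justifying that the pairwise inner products are constant purely from the regular-simplex hypothesis (not assuming the centroid is at the origin); the edge-length argument above does this cleanly.
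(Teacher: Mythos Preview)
Your proof is correct and follows essentially the same route as the paper: compute the square of the determinant as the Gram determinant, identify the Gram matrix as a rank-one perturbation of a multiple of the identity, and read off its eigenvalues $1/n$ and $(n+1)/n$ to get the product $(n+1)^{n-1}/n^n$. The only difference is that you actually \emph{derive} the value $v_i\cdot v_j=-1/n$ from the regular-simplex hypothesis via singularity of the full $(n+1)\times(n+1)$ Gram matrix, whereas the paper simply asserts it as a known fact.
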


\begin{proof}
	Let $0\leq j_1<\dots <j_n\leq n$. Since $v_0,\dots,v_n$ are unit vectors and the vertices of a regular simplex, it is a simple fact that
	
	$$
	v_j\cdot v_k=\left\{\begin{array}{rl}\displaystyle-\frac{1}{n}&
	\qquad\textnormal{if~}j\ne k\\
	\noalign{\vspace{6pt}}
	1&\qquad\textnormal{if~}j=k.
	\end{array}\right.
	$$
	Therefore,
	$$
	[\det(v_{j_1},\ldots,v_{j_n})]^2 
	= \det(A),$$
	where
	$$ A = \begin{bmatrix}
	1 & -1/n & \ldots & -1/n \\
	-1/n & 1 & \ddots & -1/n \\
	\vdots & \ddots & \ddots & \vdots\\
	-1/n & -1/n & \ldots & 1
	\end{bmatrix}.
	$$
	To compute the determinant of $A$, we will figure out its eigenvalues and then take their product. We can see one eigenvalue of $A$ is $1/n$ because 
	$$A\begin{bmatrix}1 & 1 & \ldots & 1\end{bmatrix}^{T}=\begin{bmatrix}1/n & 1/n & \ldots & 1/n\end{bmatrix}^{T}.$$ 
	Now see that $(n+1)/n$ is also an eigenvalue, because
	$$A\begin{bmatrix}1 & -1 & 0 & \ldots & 0\end{bmatrix}^{T}=\begin{bmatrix}(n+1)/n & -(n+1)/n & 0 & \ldots & 0\end{bmatrix}^{T}.$$ 
	In fact, all the remaining eigenvalues must be $(n+1)/n$ also because in general 
	$$A\begin{bmatrix}1 & 0 & \ldots & -1 & \ldots & 0\end{bmatrix}^{T}=\begin{bmatrix}(n+1)/n & 0 & \ldots & -(n+1)/n & \ldots & 0\end{bmatrix}^{T}$$ 
	Therefore, $n-1$ of the eigenvalues of $A$ are $\frac{n+1}{n}$ and the final eigenvalue of $A$ is $1/n$. Because the determinant of a square matrix is the product of its eigenvalues, we have
	$$\det(A)=\frac{(n+1)^{n-1}}{n^n},$$
	and therefore,
	$$|\det(v_{j_1},\dots,v_{j_n})| = \sqrt{\frac{(n+1)^{n-1}}{n^n}}.$$
\end{proof}

\subsection{Maximal Volume Polytopes with $n+2$ Vertices Inscribed in the Unit Sphere in $\mathbf{R}^n$}

The following theorem is due to Horv\'ath and L\'angi.

\begin{theorem}[{\cite[Th.~2]{HorvathLangi}}] \label{nplus2max}
	Let $V$ be a set of $n+2$ unit vectors in $\mathbf{R}^n$ such that $\vol(\conv(V))$ is maximal over all sets of $n+2$ unit vectors in $\mathbf{R}^n.$ Then there exist disjoint $V_1$ and $V_2$ with
	\begin{itemize}
		\item $V_1 \cup V_2 = V$.
		\item $\card(V_1)=\lfloor{n/2}\rfloor + 1$.
		\item $\card(V_2)=\lceil{n/2}\rceil + 1$.
		\item $V_1$ and $V_2$ are contained in orthogonal linear subspaces of $\mathbf{R}^n.$
		\item $\conv(V_1)$ and $\conv(V_2)$ are regular simplices.
	\end{itemize}
	In this case, we have
	$$\vol(\conv(V)) = \frac{1}{n!}\frac{(\lfloor{n/2}\rfloor +1)^{\frac{\lfloor{n/2}\rfloor +1}{2}}(\lceil{n/2}\rceil +1)^{\frac{\lceil{n/2}\rceil +1}{2}} }{{\lfloor{n/2}\rfloor}^{\frac{\lfloor{n/2}\rfloor}{2}}{\lceil{n/2}\rceil}^{\frac{\lceil{n/2}\rceil}{2}}}.$$ 
\end{theorem}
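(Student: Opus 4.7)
The plan is to analyze a maximizer $V$ via Radon's theorem, turn $\vol_n(\conv V)$ into a product of two lower-dimensional volumes, and then optimize each factor. Since $V$ has $n+2$ points in $\mathbf{R}^n$, Radon's theorem yields a partition $V = V_1 \sqcup V_2$ with $\conv(V_1) \cap \conv(V_2) \neq \emptyset$. After a perturbation-and-limit argument I may assume this intersection is a single point $p$ and each $V_\ell$ is affinely independent, so $\conv(V_\ell)$ is a simplex; set $|V_1| = k+1$, $|V_2| = n-k+1$ with $1 \leq k \leq n-1$, and write $L_\ell := \aff(V_\ell)$, so that $L_1 \cap L_2 = \{p\}$ and $\dim L_1 + \dim L_2 = n$.

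The next step is to identify the facets of $\conv(V)$ and derive a product formula. Using the Radon relation $p = \sum a_i w_i = \sum b_j u_j$ with $w_i \in V_1$, $u_j \in V_2$, $a_i, b_j > 0$, a short computation with a normal $\nu$ to $\aff(V \setminus \{w_{i_0}, u_{j_0}\})$ shows that $\nu\cdot w_{i_0}$ and $\nu\cdot u_{j_0}$ lie on the same side of $\nu\cdot p$, while removing two vertices from the same $V_\ell$ places them on opposite sides of the resulting hyperplane. Thus the facets of $\conv(V)$ are exactly the $(k+1)(n-k+1)$ subsets obtained by removing one vertex from each $V_\ell$. Coning from $p$ over these facets triangulates $\conv(V)$; placing $p$ at the origin and writing $L_2$ as the graph of a linear map $\alpha\colon L_1^\perp \to L_1$ makes each cone simplex's volume determinant block-decompose (upper-triangular). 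Summing yields
\begin{equation*}
\vol_n(\conv V) \;=\; \frac{1}{\binom{n}{k}}\,\vol_k(\conv V_1)\,\vol_{n-k}(\pi(\conv V_2)),
\end{equation*}
where $\pi\colon \mathbf{R}^n \to L_1^\perp$ is the orthogonal projection.

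Then I would bound each factor. Since $\pi|_{L_2}$ is a linear isomorphism $L_2 \to L_1^\perp$ with $|\det|\leq 1$, equality iff $L_2 = L_1^\perp$ (i.e.\ $L_1 \perp L_2$), we obtain $\vol_{n-k}(\pi(\conv V_2)) \leq \vol_{n-k}(\conv V_2)$. The set $L_\ell \cap S^{n-1}$ is a sphere in $L_\ell$ of radius $r_\ell = \sqrt{1 - d(O,L_\ell)^2} \leq 1$, and $\conv(V_\ell)$ is a simplex inscribed in it, so Theorem~\ref{regmaxvolume} (after recentering the sphere and rescaling by $1/r_\ell$) gives $\vol_k(\conv V_1) \leq r_1^k V_k^*$ and $\vol_{n-k}(\conv V_2) \leq r_2^{n-k} V_{n-k}^*$, with $V_m^* := \frac{1}{m!}\sqrt{(m+1)^{m+1}/m^m}$ the regular-simplex volume derivable from Propositions~\ref{simplex volume formula} and~\ref{regdetsformula}. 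Equality throughout requires each $V_\ell$ to be regular and $r_1=r_2=1$, which forces $O \in L_1 \cap L_2 = \{p\}$, so $p = O$.

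Combining everything gives $\vol_n(\conv V) \leq \frac{1}{n!}\sqrt{(k+1)^{k+1}(n-k+1)^{n-k+1}/(k^k(n-k)^{n-k})}$. To maximize over $k$, set $g(k) = (k+1)\log(k+1) - k\log k$; then $g'(k) = \log(1+1/k)$ is strictly decreasing, so $g$ is concave and $g(k)+g(n-k)$ is maximized at $k=n/2$, hence at $\lfloor n/2\rfloor$ (equivalently $\lceil n/2\rceil$) over integers. Substituting reproduces the asserted formula, attained by a regular $\lfloor n/2\rfloor$-simplex and a regular $\lceil n/2\rceil$-simplex in complementary orthogonal subspaces through the origin. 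The main obstacle is the facet identification together with the block-determinant triangulation that yields the product formula; auxiliary steps include a compactness argument on $(S^{n-1})^{n+2}$ for existence of a maximizer and a limiting argument to handle non-generic Radon configurations.
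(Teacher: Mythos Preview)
Your approach is sound and aligns with the \emph{idea} the paper sketches immediately after the theorem statement. Note, however, that the paper does not actually prove Theorem~\ref{nplus2max}: it is quoted from Horv\'ath and L\'angi, and the paper only offers a one-paragraph outline. That outline is precisely the Radon-partition strategy you pursue---partition $V$ into $V_1,V_2$ via Radon's theorem, establish the product formula
\[
\vol(\conv(V))=\binom{n}{m-1}^{-1}\vol(\conv(V_1))\,\vol(\conv(V'_2)),
\]
with $m=\card(V_1)$ and $V'_2$ the projection onto the orthogonal complement of $\aff(V_1)$, and then optimize each factor separately. The paper further remarks that Horv\'ath and L\'angi themselves derive the partition via the Gale transform rather than Radon's theorem, and that the Radon route was used by Kind and Kleinschmidt for the related isodiametric problem.

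So your write-up is a genuine fleshing-out of the Radon sketch rather than a different method. Two comments on the details you supply beyond the paper's outline: the facet identification and block-triangular coning argument you give for the product formula is correct in spirit and is exactly the step the paper leaves unstated; and your concavity computation with $g(k)=(k+1)\log(k+1)-k\log k$ is the right way to pin down $k=\lfloor n/2\rfloor$, which the paper does not discuss at all. The one place to be careful is the passage from a generic Radon configuration to an arbitrary maximizer: you invoke a perturbation-and-limit argument, which is fine for the inequality and the volume value, but to conclude the structural statement (that the maximizer \emph{itself} splits as two orthogonal regular simplices) you should argue directly that a maximizer is already generic---for instance, if some $r_\ell<1$ or the Radon intersection were more than a point, a small motion strictly increases volume---rather than only obtain the structure in a limit.
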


An idea for a proof is to use a theorem of Radon (see \cite[Th.~1.2]{Eckhoff}) to determine the appropriate partition $V_1$ and $V_2$ so that
$$\vol(\conv(V)) = \binom{n}{m-1}^{-1}\vol(\conv(V_1))\vol(\conv(V'_2)),$$
where $m = \card(V_1)$ and $V'_2$ is the orthogonal projection of $V_2$ on a subspace orthogonal to $\aff(V_1).$ While mentioning this, Horv\'ath and L\'angi use the Gale Transform instead of Radon's theorem to find the $V_1$ and $V_2.$

Kind and Kleinschmidt \cite{KiKl} used Radon's theorem in this way to solve the problem of maximizing $\vol(\conv(V))$ when the diameter of $V$ is fixed. This isodiametric volume problem and the problem of finding maximal volume polytopes inscribed in the unit sphere are among a host of related problems in $n$-dimensional geometry. For more discussion, see the introduction to \cite{HorvathLangi}.

\section{The Maximin Determinants Problem for $n+1$ Unit Vectors in $\mathbf{C}^n$} \label{main result section}

Let $V = \{w_0,\dots,w_n\} \subset \mathbf{C}^n,$ where  $w_j=[z_{j,1}, \dots ,z_{j,n}]^T$ with $z_{j,k} = x_{j,k} + iy_{j,k}$. We assume the $w_j$ are distinct, so $V$ actually has $n+1$ elements. Call
$$u_j=[x_{j,1}, \ldots, x_{j,n}, y_{j,1}, \dots y_{j,n}]^T  \text{ and } v_j=[-y_{j,1}, \ldots -y_{j,n}, x_{j,1}, \dots, x_{j,n}]^T$$
the \textit{associated real vectors} of $w_j.$ Note for all $j$ we have $u_j,v_j \in \mathbf{R}^{2n}$ and $u_j \cdot v_j = 0.$

Denote the set of all associated real vectors of the vectors of $V$ as $\ddot{V}.$ If the $w_j$ are unit vectors then $\conv(\ddot{V})$ is a convex polytope inscribed in $S^{2n-1}.$

If $y_{j,k} = 0$ for every $j$ and $k$ and $\{[x_{j,1},\dots,x_{j,n}]^T:0 \leq j \leq n\}$ is the vertex set of a regular simplex in $\mathbf{R}^n,$ then say that $V$ is the vertex set of a \textit{real regular simplex} in $\mathbf{C}^n.$

\begin{theorem} \label{maxvolumecomplex}
	Let $V = \{w_0,\dots,w_n\} \subset \mathbf{C}^n$ be a set of unit vectors and the vertex set of a real regular simplex. Then $\vol(\conv(\ddot{V}))$ is maximal over all convex polytopes with $2n + 2$ vertices inscribed in $S^{2n-1}.$
\end{theorem}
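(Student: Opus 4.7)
The plan is to recognize the configuration $\ddot{V}$ as being precisely of the form singled out by Theorem~\ref{nplus2max} applied in ambient dimension $2n$ with $2n+2$ vertices, and then to deduce that $\ddot{V}$ attains the maximum volume by a brief uniqueness argument.

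First I would unwind the hypothesis that $V$ is a real regular simplex: it forces $y_{j,k}=0$ for all $j$ and $k$, so the associated real vectors take the particularly clean form
$$u_j = [x_{j,1},\dots,x_{j,n},0,\dots,0]^T, \qquad v_j = [0,\dots,0,x_{j,1},\dots,x_{j,n}]^T.$$
Setting $L_1 = \{x \in \mathbf{R}^{2n} : x_{n+1}=\dots=x_{2n}=0\}$ and $L_2 = \{x \in \mathbf{R}^{2n} : x_1=\dots=x_n=0\}$, one has $\{u_0,\dots,u_n\}\subset L_1$, $\{v_0,\dots,v_n\}\subset L_2$, $L_1\perp L_2$, and $L_1 \oplus L_2 = \mathbf{R}^{2n}$. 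Because by hypothesis $\{[x_{j,1},\dots,x_{j,n}]^T : 0\le j\le n\}$ is the vertex set of a regular $n$-simplex inscribed in $S^{n-1}$, the sets $\{u_j\}$ and $\{v_j\}$ are each vertex sets of regular $n$-simplices inscribed in the unit sphere of the respective subspace. All $2n+2$ vectors in $\ddot{V}$ are distinct because $L_1 \cap L_2 = \{0\}$ and each vector has unit norm.

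Next I would observe that this is exactly the structural description given by Theorem~\ref{nplus2max} when that theorem is read with $n$ replaced by $2n$: the partition has class sizes $\lfloor 2n/2\rfloor + 1 = n+1$ and $\lceil 2n/2\rceil + 1 = n+1$, the pieces lie in orthogonal linear subspaces of $\mathbf{R}^{2n}$, and each piece is a regular simplex.

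Finally I would upgrade the one-way implication that maximal implies structured in Theorem~\ref{nplus2max} into the converse we require. Since the volume functional $V\mapsto\vol(\conv(V))$ is continuous on the compact space $(S^{2n-1})^{2n+2}$, a maximizer exists, and Theorem~\ref{nplus2max} forces that maximizer to be structured. But any two structured configurations differ by an orthogonal transformation of $\mathbf{R}^{2n}$---a decomposition of $\mathbf{R}^{2n}$ into two orthogonal $n$-planes is unique up to an orthogonal change of basis, and a regular $n$-simplex inscribed in an $(n-1)$-sphere is unique up to orthogonal isometry---so every structured configuration has the same volume as the maximizer. Since $\ddot{V}$ is structured, $\vol(\conv(\ddot{V}))$ is maximal. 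The only real subtlety is this last uniqueness-up-to-isometry step, which is what converts Theorem~\ref{nplus2max}'s necessary condition on a maximizer into the sufficient condition we need.
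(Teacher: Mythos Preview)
Your proof is correct and follows essentially the same route as the paper: identify the partition $\ddot{V}=\{u_j\}\cup\{v_j\}$ into two regular $n$-simplices lying in orthogonal $n$-dimensional subspaces of $\mathbf{R}^{2n}$, and invoke Theorem~\ref{nplus2max} in ambient dimension $2n$. You are in fact more careful than the paper at one point: the paper simply appeals to Theorem~\ref{nplus2max} to conclude maximality, tacitly using the converse direction, whereas you explicitly close that gap with the uniqueness-up-to-isometry argument (which is indeed justified, since Theorem~\ref{nplus2max} also supplies the maximal volume as an explicit number, so any configuration isometric to a maximizer attains it).
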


\begin{proof}
	Let $V = \{w_0,\dots,w_n\} \subset \mathbf{C}^n$ be unit vectors and the vertex set of a real regular simplex. Let $V_1 = \{u_j:0 \leq j \leq n\}$ and $V_2 = \{v_j: 0 \leq j \leq n\}. $ For all $j,$ $k,$ we have $u_j \cdot v_k = 0$. Therefore, $\conv(V_1)$ and $\conv(V_2)$ are contained in orthogonal linear subspaces, and each have dimension $n.$ Furthermore, $\conv(V_1)$ and $\conv(V_2)$ are regular simplices in their respective subspaces. So, by Theorem \ref{nplus2max}, $\vol(\conv(\ddot{V}))$ is maximal over all convex polytopes with $2n+2$ vertices inscribed in $S^{2n-1}.$
\end{proof}

\begin{proposition}[\cite{StackExchange}] \label{stackexchange}
	Let $Z$ be an $n$-by-$n$ complex matrix with real and imaginary parts given by $Z=X+iY$. Then
	$$|\det Z|^2=\det\begin{bmatrix} X & -Y\\ Y & X \end{bmatrix}.$$
\end{proposition}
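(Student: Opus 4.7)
The plan is to compute the determinant of the $2n \times 2n$ block matrix $M = \begin{bmatrix} X & -Y \\ Y & X \end{bmatrix}$ by performing block row and column operations with complex entries. Although $M$ has real entries, its determinant as a real matrix agrees with its determinant computed over $\mathbf{C}$, so we are free to multiply $M$ on the left and right by complex matrices of determinant $1$.

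The specific transformation I would use is to multiply $M$ on the left by $L = \begin{bmatrix} I & iI \\ 0 & I \end{bmatrix}$ and on the right by $R = \begin{bmatrix} I & -iI \\ 0 & I \end{bmatrix}$, each of which is block unitriangular and hence has determinant $1$. A direct block computation, using the identities $iZ = iX - Y$ (so that the combination $X + i\cdot Y$ of the top block rows yields $Z$ and $-Y + i\cdot X = iZ$ in the first block row) together with $\bar{Z} = X - iY$, shows that
$$LMR = \begin{bmatrix} Z & 0 \\ Y & \bar{Z} \end{bmatrix}.$$
This is block lower triangular, so its determinant is $\det(Z)\det(\bar{Z}) = \det(Z)\overline{\det(Z)} = |\det Z|^2$. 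Since $\det(L) = \det(R) = 1$, this equals $\det(M)$, yielding the claim.

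There is no substantial obstacle here; the whole content is identifying the correct pair of block operations. The only thing to verify carefully is the block multiplication $LMR$: first one checks that $LM = \begin{bmatrix} Z & iZ \\ Y & X \end{bmatrix}$, and then that right-multiplication by $R$ subtracts $i$ times the first block column from the second block column, killing the top right block and converting the bottom right entry from $X$ into $X - iY = \bar{Z}$. Both steps are routine $2 \times 2$ block multiplications. An alternative approach would be to exhibit an explicit complex change of basis on $\mathbf{C}^{2n} = \mathbf{C}^n \oplus \mathbf{C}^n$ that block-diagonalizes the real representation of complex multiplication by $Z$ into the two-by-two block $\mathrm{diag}(Z, \bar{Z})$, but this is essentially the same calculation dressed differently, so I would prefer the direct block-operation proof.
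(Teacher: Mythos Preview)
Your proof is correct and follows essentially the same strategy as the paper: both compute the real $2n\times 2n$ determinant by passing to complex block operations that factor it as $\det(Z)\det(\bar Z)$. The paper detours through the auxiliary matrix $A=\begin{bmatrix} X & iY\\ iY & X\end{bmatrix}$ and then conjugates by $\mathrm{diag}(I,-iI)$ to reach $M$, whereas you work on $M$ directly with unitriangular $L$ and $R$; this is a minor streamlining of the same idea rather than a different method.
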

\begin{proof}
	Let $A=\begin{bmatrix} X & iY \\ iY & X \end{bmatrix}$. Then
	\begin{align*}
	\det(A)&=\det\begin{bmatrix} X + iY & iY \\ X + iY & X
	\end{bmatrix} \\
	&= \det\begin{bmatrix} I & iY \\ I & X \end{bmatrix}\det\begin{bmatrix} X+iY & 0 \\ 0 & I \end{bmatrix} \\
	&=\det(X-iY)\det(X+iY)
	=|\det{Z}|^2.
	\end{align*}
	Now we note that
	$$\begin{bmatrix} I & 0 \\ 0 & -iI \end{bmatrix}\begin{bmatrix} X & iY \\ iY & X \end{bmatrix}
	\begin{bmatrix} I & 0 \\ 0 & iI \end{bmatrix} = \begin{bmatrix} X & -Y \\ Y & X \end{bmatrix},$$
	where
	$\begin{bmatrix} I & 0 \\ 0 & -iI \end{bmatrix}\begin{bmatrix} I & 0 \\ 0 & iI \end{bmatrix} = I.$
	Therefore,
	$$\det\begin{bmatrix} X & iY \\ iY & X \end{bmatrix} = \det\begin{bmatrix} X & -Y \\ Y & X \end{bmatrix} = |\det{Z}|^2.$$
\end{proof}

Note that, as a result, $|\det(w_{j_1},\dots, w_{j_n})|^2=\det(u_{j_1},\dots,u_{j_n}, v_{j_n},\dots,v_{j_n})$ for all $0 \leq j_1 < j_2 < \dots < j_n \leq n.$

\begin{lemma} \label{origininsidecomplex}
	Let $w_0,\ldots,w_n \in \mathbf{C}^n$. There exist $\tilde{w}_0,\ldots,\tilde{w}_n \in{\mathbf{C}^n}$ such that the following hold.
	\begin{enumerate}
		\item $|w_j|=|\tilde{w}_j|$ for each $j.$
		\item There exist nonnegative real numbers $r_0,\dots,r_n$ such that $r_0 + \dots + r_n = 1$ and $r_0\tilde{w}_0 + \dots + r_n\tilde{w}_n=0$.
		\item $|\det(w_{j_1}, \ldots, w_{j_n})| = |\det(\tilde{w}_{j_1}, \ldots, \tilde{w}_{j_n})|$ for all $0 \leq j_1 < j_2 < \dots < j_n \leq n.$
	\end{enumerate}
\end{lemma}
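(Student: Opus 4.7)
The plan is to exploit two facts: (i) any $n+1$ vectors in $\mathbf{C}^n$ are linearly dependent over $\mathbf{C}$, and (ii) multiplying a vector $w_j$ by a unit complex scalar $\lambda_j$ leaves $|w_j|$ unchanged and also leaves the absolute value $|\det(w_{j_1},\ldots,w_{j_n})|$ unchanged, since the determinant picks up a scalar factor $\lambda_{j_1}\cdots\lambda_{j_n}$ of modulus one by multilinearity. Accordingly, I will take $\tilde{w}_j = \lambda_j w_j$ for suitably chosen unit complex numbers $\lambda_j$ designed to force the origin into the convex hull of the $\tilde{w}_j$'s.

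To choose the $\lambda_j$'s, first I would use the linear dependence of $w_0,\ldots,w_n$ over $\mathbf{C}$ to pick complex numbers $\mu_0,\ldots,\mu_n$, not all zero, satisfying $\sum_j \mu_j w_j = 0$. Setting $S = \sum_j |\mu_j| > 0$, define $\lambda_j = \mu_j/|\mu_j|$ when $\mu_j\neq 0$ and $\lambda_j = 1$ otherwise, and then set $\tilde{w}_j = \lambda_j w_j$ and $r_j = |\mu_j|/S$. Conditions~(1) and~(3) of the lemma are then immediate from $|\lambda_j|=1$ together with the multilinearity of the determinant. For condition~(2), one uses the identity $|\mu_j|\lambda_j = \mu_j$, which holds in both cases ($\mu_j=0$ or $\mu_j\neq 0$), so
\[
\sum_j r_j \tilde{w}_j = \frac{1}{S}\sum_j |\mu_j|\lambda_j w_j = \frac{1}{S}\sum_j \mu_j w_j = 0,
\]
while $\sum_j r_j = S/S = 1$ by construction.

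This is really just the natural generalization of the trick used in Section~\ref{motivating example}: in the real case only the signs $\lambda_j \in \{\pm 1\}$ are available, but over $\mathbf{C}$ the entire unit circle is available, giving precisely the freedom needed. There is no substantial obstacle here once one recognizes that unit complex scalars preserve determinant magnitudes; the only small bookkeeping point is handling indices $j$ with $\mu_j = 0$, which is dispatched harmlessly by the choice $\lambda_j = 1$, since then $r_j = 0$ and that term drops out of the convex combination.
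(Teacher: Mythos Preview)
Your proof is correct and follows essentially the same approach as the paper: both use the linear dependence of $n+1$ vectors in $\mathbf{C}^n$ to write $\sum_j \mu_j w_j = 0$, then rotate each $w_j$ by the unit scalar $\mu_j/|\mu_j|$ and take $r_j = |\mu_j|/\sum_k |\mu_k|$. Your treatment is arguably slightly cleaner in that you explicitly handle the case $\mu_j = 0$ by setting $\lambda_j = 1$, whereas the paper's phrasing ``choose $\theta_j$ so that $r_j e^{i\theta_j} = \gamma_j/(|\gamma_0|+\cdots+|\gamma_n|)$'' leaves $\theta_j$ undetermined when $\gamma_j = 0$, though this is harmless for the same reason you note.
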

\begin{proof}
	Let $w_0,\ldots,w_n \in \mathbf{C}^n$. Because they must be linearly dependent, there exist $\gamma_0,\dots,\gamma_n \in \mathbf{C}$, not all zero, such that $\gamma_0 z_0 + \ldots + \gamma_n z_n = 0.$ Let 
	$$r_j = \frac{|\gamma_j|}{|\gamma_0|+\ldots+|\gamma_n|}.$$ 
	Note that $r_j \geq 0$ for each $j$ and $\sum r_j = 1$. Now, choose $\theta_j$ so that
	$$r_je^{i\theta_j}=\frac{\gamma_j}{|\gamma_0|+\ldots+|\gamma_n|}.$$ 
	We then have $r_0(e^{i\theta_0}z_0) + \ldots + r_n(e^{i\theta_n}z_n) = 0$. Let $\tilde{w}_j = e^{i\theta_j}w_j$. Then $(1)$ and $(2)$ are clear, and to see $(3)$ observe
	$$|\det(\tilde{w}_{j_1}, \ldots, \tilde{w}_{j_n})| = |\det(e^{i\theta_{j_1}}w_{j_1}, \ldots, e^{i\theta_{j_n}}w_{j_n})| = |\det(w_{j_1}, \ldots, w_{j_n})|. \qedhere$$ 
\end{proof}

For a set $V = \{w_0,\dots,w_n\}$ of unit vectors in $\mathbf{C}^n,$ we are concerned with $\D(V),$ the minimum determinant magnitude. Let $\tilde{V} = \{\tilde{w}_0,\dots,\tilde{w}_n\},$ where the $\tilde{w}_j$ are as in Lemma \ref{origininsidecomplex}. Then $\tilde{V}$ is a set of unit vectors and $\D(V)=\D(\tilde{V}).$ Hence, we can restrict ourselves to considering those sets which satisfy property $2$ of Lemma \ref{origininsidecomplex}. Furthermore, if any of the $r_j$ are equal to $0,$ then $\D(V) = 0.$ This is clearly not the largest that $\D(V)$ can be, so we should be able to assume that none of the $r_j$ are equal to $0.$ Let us say that $V = \{w_0,\dots,w_n\}$ has \textit{Property A} if there exist real $r_0,\dots,r_n,$ all strictly greater than $0,$ with $r_0 + \dots + r_n = 1$ and $r_0w_0 + \dots + r_nw_n = 0.$ We summarize these observations with the following remark. 

\begin{remark} \label{remark}
	The maximal $\D(V)$ over all sets of $n+1$ unit vectors in $\mathbf{C}^n$ is the same as the maximal $\D(V)$ over all $V$ which have Property A.
\end{remark}

\begin{proposition} \label{detasproductofcomplexdets}
	Let $V = \{w_0, \dots w_n\}$ have Property A. Let $0 \leq j_1 < j_2 < \dots < j_n \leq n$ and $0 \leq k_1 < k_2 < \dots < k_n \leq n.$ Then
	$$|\det(u_{j_1},\dots,u_{j_n},v_{k_1},\dots,v_{k_n})| = |\det(w_{j_1},\dots,w_{j_n})||\det(w_{k_1},\dots,w_{k_n})|.$$
\end{proposition}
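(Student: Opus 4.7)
My plan is to reduce everything to the diagonal case \(J=K\), writing \(J=\{j_1,\dots,j_n\}\), \(K=\{k_1,\dots,k_n\}\), and letting \(e_J,e_K\) denote the indices in \(\{0,\dots,n\}\) missing from \(J\) and \(K\) respectively. When \(e_J=e_K\) the identity is exactly the remark immediately following Proposition~\ref{stackexchange}, so the real work is in the case \(e_J\neq e_K\). A crucial preliminary observation is that \(e_J\neq e_K\) forces \(e_K\in J\) and \(e_J\in K\), which is precisely what will let the column substitutions below make sense.

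The engine is that Property A gives three linear relations with the \emph{same} positive coefficients \(r_0,\dots,r_n\): from \(\sum_\ell r_\ell w_\ell=0\) I will, by setting the real and imaginary \(n\)-tuples separately to zero, immediately read off both \(\sum_\ell r_\ell u_\ell=0\) and \(\sum_\ell r_\ell v_\ell=0\). First I will take the real determinant \(\det(u_{j_1},\dots,u_{j_n},v_{j_1},\dots,v_{j_n})\), locate the column \(v_{e_K}\) among \(v_{j_1},\dots,v_{j_n}\) (available since \(e_K\in J\)), substitute \(v_{e_K}=-r_{e_K}^{-1}\sum_{\ell\neq e_K}r_\ell v_\ell\), and expand by multilinearity. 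For each \(\ell\in J\setminus\{e_K\}\) the corresponding term has \(v_\ell\) already appearing in another column and so vanishes, leaving only \(\ell=e_J\). After sorting the \(v\)-indices back into canonical order and taking absolute values, this yields
\[
|\det(u_{j_1},\dots,u_{j_n},v_{j_1},\dots,v_{j_n})|=\frac{r_{e_J}}{r_{e_K}}\,|\det(u_{j_1},\dots,u_{j_n},v_{k_1},\dots,v_{k_n})|.
\]
The identical maneuver on the complex side, this time substituting \(w_{e_J}=-r_{e_J}^{-1}\sum_{\ell\neq e_J}r_\ell w_\ell\) into the unique column \(w_{e_J}\) of \(\det(w_{k_1},\dots,w_{k_n})\) (available since \(e_J\in K\)), will by the same multilinear cancellation produce
\[
|\det(w_{k_1},\dots,w_{k_n})|=\frac{r_{e_K}}{r_{e_J}}\,|\det(w_{j_1},\dots,w_{j_n})|.
\]

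To finish I will invoke Proposition~\ref{stackexchange} to rewrite the diagonal real determinant in the first identity as \(|\det(w_{j_1},\dots,w_{j_n})|^2\); substituting into that identity and then applying the second identity absorbs one factor of \(|\det(w_{j_1},\dots,w_{j_n})|\) into \(|\det(w_{k_1},\dots,w_{k_n})|\), with the ratios \(r_{e_J}/r_{e_K}\) and \(r_{e_K}/r_{e_J}\) cancelling exactly. I do not foresee a conceptual obstacle; the part that needs care is the combinatorial bookkeeping to confirm that the surviving column index set \((J\setminus\{e_K\})\cup\{e_J\}\) is genuinely \(K\), and similarly on the complex side, so that the two proportionality factors are honest reciprocals of each other and no signs linger once absolute values are taken.
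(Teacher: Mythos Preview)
Your proposal is correct and follows essentially the same approach as the paper: both arguments use the Property~A relation to perform a single column substitution that connects the mixed determinant \(|\det(u_J,v_K)|\) to a diagonal one, and then invoke Proposition~\ref{stackexchange} on the diagonal case. The only cosmetic difference is that the paper performs two such swaps entirely on the real side (one on the \(u\)-columns of \(|\det(u_K,v_K)|\) and one on the \(v\)-columns of \(|\det(u_J,v_J)|\)), multiplies, and takes a square root, whereas you do one swap on the real \(v\)-columns and one directly on the complex determinant \(|\det(w_K)|\), which is slightly cleaner since it avoids the squaring step.
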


\begin{proof}
	Assume $j_1 = 1,\dots,j_n = n$ and $k_1 = 0,\dots,k_n = n-1.$ We will show,
	$$|\det(u_1,\dots,u_n,v_0,\dots,v_{n-1}) = |\det(w_1,\dots,w_n)||\det(w_0,\dots,w_{n-1})|.$$
	The proof is the same for any other nontrivial choice. The trivial choice is when $j_1 = k_1, \dots, j_n = k_n,$ and in this case the result follows immediately from Proposition \ref{stackexchange}.
	
	Since $V$ has Property A, there exist real $r_j > 0$ such that
	$$w_0 = \sum_{j=1}^{n}\frac{-r_j}{r_0}w_j.$$
	This implies
	$$u_0 = \sum_{j=1}^{n}\frac{-r_j}{r_0}u_j.$$
	Thus,
	\begin{align*}
	|\det(u_0,\dots,u_{n-1},v_0,\dots,v_{n-1})|
	&= |\det(\sum_{j=1}^{n}(-r_j/r_0)u_j,u_1,\dots,u_{n-1},v_0,\dots,v_{n-1})|\\
	&= |\det((-r_n/r_0)u_n,u_1,\dots,u_{n-1},v_0,\dots,v_{n-1})|\\
	&= (r_n/r_0)|\det(u_1,\dots,u_n,v_0,\dots,v_{n-1})|.\end{align*}
	Again using that $V$ has Property A, we can say
	$$v_n = \sum_{j=0}^{n-1}\frac{-r_j}{r_n}v_j,$$
	and thus
	$$|\det(u_1,\dots,u_n,v_1,\dots,v_n)| = (r_0/r_n)|\det(u_1,\dots,u_n,v_0,\dots,v_{n-1})|.$$
	Therefore,
	\begin{align*}
	|\det(u_1,\dots,u_n,v_1,\dots,v_n)||\det(u_0,\dots,u_{n-1},v_0,\dots,v_{n-1})| = \\|\det(u_1,\dots,u_n,v_0,\dots,v_{n-1})|^2.
	\end{align*}
	But, by Proposition \ref{stackexchange},
	\begin{align*}
	|\det(u_1,\dots,u_n,v_1,\dots,v_n)||\det(u_0,\dots,u_{n-1},v_0,\dots,v_{n-1})| = \\ |\det(w_1,\dots,w_n)|^2|\det(w_0,\dots,w_{n-1})|^2
	\end{align*}
	and therefore
	$$|\det(u_1,\dots,u_n,v_0,\dots,v_{n-1})| = |\det(w_1,\dots,w_n)||\det(w_0,\dots,w_{n-1})|.\qedhere$$
\end{proof}

\begin{lemma} \label{nozerodets}
	If $V$ has Property A then for every $0 \leq j_1 < j_2 < \dots < j_n \leq n$ and $0 \leq k_1 < k_2 < \dots < k_n \leq n$, we have $\det({v_{j_1},\dots,v_{j_n},w_{k_1},\dots,w_{k_n}}) \neq 0.$
\end{lemma}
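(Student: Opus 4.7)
The plan is to reduce the claim to a statement about complex $n\times n$ determinants via Proposition \ref{detasproductofcomplexdets}. Interpreting the determinant in the lemma as the real $2n\times 2n$ expression $\det(u_{j_1},\dots,u_{j_n},v_{k_1},\dots,v_{k_n})$ built from the associated real vectors, the remark following Proposition \ref{stackexchange} (in the case $\{j_i\}=\{k_i\}$) and Proposition \ref{detasproductofcomplexdets} (in all other cases) yield the factorization
\[
|\det(u_{j_1},\dots,u_{j_n},v_{k_1},\dots,v_{k_n})| = |\det(w_{j_1},\dots,w_{j_n})|\cdot|\det(w_{k_1},\dots,w_{k_n})|,
\]
so it suffices to show every complex minor $\det(w_{j_1},\dots,w_{j_n})$ is nonzero.

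For that, I would proceed by contradiction. Suppose $\det(w_{j_1},\dots,w_{j_n}) = 0$, and let $j_0$ denote the omitted index in $\{0,\dots,n\}$. Setting $W := \mathrm{span}_{\mathbf{C}}(w_{j_1},\dots,w_{j_n})$, we have $\dim_{\mathbf{C}} W < n$. Property A supplies positive reals $r_0,\dots,r_n$ summing to one with $\sum_j r_j w_j = 0$; since $r_{j_0} > 0$, this solves for
\[
w_{j_0} = -\frac{1}{r_{j_0}}\sum_{\ell=1}^{n} r_{j_\ell} w_{j_\ell} \in W,
\]
so all of $w_0,\dots,w_n$ lie in $W$. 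Consequently \emph{every} complex $n\times n$ minor vanishes, and combined with the factorization above this gives $\D(V) = 0$.

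The main obstacle is the last mile: Property A by itself does not rule out this degenerate picture (for instance, three unit vectors at cube roots of unity on a single complex line in $\mathbf{C}^2$ satisfy Property A yet give all minors zero). To reach a contradiction I would invoke the surrounding context: the lemma sits immediately after Remark \ref{remark}, which restricts attention to $V$ that are candidates for the maximum of $\D$, and Theorem \ref{maxvolumecomplex} combined with Proposition \ref{regdetsformula} exhibits an explicit $V$ with $\D(V)>0$. Hence any candidate $V$ already has $\D(V) > 0$, directly contradicting the vanishing derived above. In spirit the Property A argument only establishes the dichotomy ``either all complex minors vanish or all are nonzero,'' and it is the maximin framework that eliminates the vanishing branch and yields the lemma as stated.
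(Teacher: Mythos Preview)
Your reduction via Proposition~\ref{detasproductofcomplexdets} is exactly the paper's approach: the paper's entire proof is the single sentence ``This follows immediately from Proposition~\ref{detasproductofcomplexdets}.''

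You go further, however, and correctly observe that the factorization alone does not finish the job: one still needs each complex minor $\det(w_{j_1},\dots,w_{j_n})$ to be nonzero, and your counterexample (three unit vectors at cube roots of unity along a single complex line in $\mathbf{C}^2$) shows that Property~A by itself does not guarantee this. The lemma as literally stated is therefore false, and the paper's one-line proof shares the gap you identify. Your dichotomy---under Property~A either every complex minor vanishes or none does---is correct, and your suggested repair of restricting to $V$ with $\D(V)>0$ (harmless in the maximin setting by Remark~\ref{remark}) is the natural fix. Strictly speaking this proves the lemma only under that added hypothesis, not as written; the downstream use in Lemma~\ref{ptopevolumeformula} is unaffected, since in the degenerate case $\D(V)=0$ all the associated real vectors lie in a proper subspace of $\mathbf{R}^{2n}$ and both sides of the volume identity there vanish.
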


\begin{proof}
	This follows immediately from Proposition \ref{detasproductofcomplexdets}.
\end{proof}

\begin{lemma} \label{ptopevolumeformula}
	Let $V = \{w_0,\ldots,w_n\} \subset \mathbf{C}^n$ have property A. Then,
	$$\vol(\conv(\ddot{V})) = \frac{1}{(2n)!}\sum |\det(u_{j_1}, \ldots, u_{j_n}, v_{k_1}, \ldots, v_{k_n})|,$$
	where the sum ranges over all $0 \leq j_1 < j_2 < \dots < j_n \leq n$ and $0 \leq k_1 < k_2 < \dots < k_n \leq n.$
\end{lemma}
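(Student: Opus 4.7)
The plan is to decompose $\conv(\ddot{V})$ into $2n$-dimensional pyramids with apex at the origin, one over each facet of the polytope, and then apply Proposition \ref{simplex volume formula} to each pyramid and sum. This reduces the lemma to identifying the facets of $\conv(\ddot{V})$ as exactly the $(n+1)^2$ $(2n-1)$-simplices $\conv(\{u_{j_1},\ldots,u_{j_n},v_{k_1},\ldots,v_{k_n}\})$ obtained by deleting one $u_j$ and one $v_k$ from $\ddot{V}$.

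First I would verify that $0$ lies in the interior of $\conv(\ddot{V})$. Property A gives $\sum_j r_j w_j = 0$ with $r_j > 0$ and $\sum r_j = 1$; applying this to the real and imaginary coordinates yields $\sum r_j u_j = 0$ and $\sum r_j v_j = 0$. Averaging, $0 = \sum (r_j/2) u_j + \sum (r_j/2) v_j$ is a convex combination of the points of $\ddot{V}$ with all coefficients strictly positive, and Lemma \ref{nozerodets} exhibits a linearly independent $2n$-tuple inside $\ddot{V}$, so $\conv(\ddot{V})$ is full-dimensional and $0$ is interior. Next I would pin down the affine-dependence structure of $\ddot{V}$: the $2n \times (2n+2)$ matrix of these vectors has rank $2n$ by Lemma \ref{nozerodets}, so its null space is two-dimensional and is spanned by the two linearly independent relations $\sum r_j u_j = 0$ and $\sum r_j v_j = 0$. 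Among these, the affine relations---those whose coefficients sum to $0$---are exactly the scalar multiples of $\sum r_j u_j - \sum r_j v_j = 0$, a Radon partition with classes $\{u_0,\ldots,u_n\}$ and $\{v_0,\ldots,v_n\}$.

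The main obstacle is converting this uniqueness into a facet description. The key general fact is that for a polytope in $\mathbf{R}^d$ with $d+2$ vertices and a unique (up to scale) affine dependence $\sum \lambda_i p_i = 0$ with all $\lambda_i \neq 0$, the $d$-subset obtained by omitting vertices $p_a, p_b$ spans a facet if and only if $\lambda_a \lambda_b < 0$: if $f$ is an affine functional vanishing on the remaining $d$ points, then $\sum \lambda_i f(p_i) = 0$ reduces to $\lambda_a f(p_a) = -\lambda_b f(p_b)$, so $f(p_a), f(p_b)$ share a sign (i.e., $p_a, p_b$ lie on the same side of the facet hyperplane) precisely when $\lambda_a \lambda_b < 0$. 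With our sign pattern ($+r_j$ on each $u_j$, $-r_j$ on each $v_j$), the facets are exactly those obtained by removing one $u$ and one $v$; there are $(n+1)^2$ of them, each a $(2n-1)$-simplex. Each facet together with $0$ spans a $2n$-simplex of volume $\frac{1}{(2n)!}|\det(u_{j_1},\ldots,u_{j_n},v_{k_1},\ldots,v_{k_n})|$ by Proposition \ref{simplex volume formula}, and since $0$ is interior these pyramids have pairwise disjoint interiors and together tile $\conv(\ddot{V})$. Summing their volumes yields the stated formula.
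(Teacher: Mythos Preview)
Your argument is correct and arrives at the same simplicial decomposition as the paper, but by a genuinely different route. The paper proceeds by brute force: it proposes the $(n+1)^2$ simplices $\conv(0,u_{j_1},\dots,u_{j_n},v_{k_1},\dots,v_{k_n})$ and verifies directly, via explicit convex-combination manipulations, that they cover $\conv(\ddot V)$ and overlap only in faces. Your approach is structural: you identify the unique (up to scale) affine dependence on the $2n+2$ points, invoke the standard facet description of a $(d+2)$-vertex $d$-polytope in terms of the sign pattern of its Radon partition, conclude that the facets are exactly the $(n+1)^2$ sets obtained by deleting one $u$ and one $v$, and then cone over them from the interior point $0$.

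What each approach buys: the paper's argument is entirely self-contained and elementary, requiring no outside polytope theory, at the cost of some opaque index bookkeeping. Your argument is shorter and more conceptual---it explains \emph{why} those particular simplices appear (they are the cones over the facets), and it makes transparent the role of Property~A (it forces the Radon partition to separate the $u$'s from the $v$'s). It does, however, lean on the facet criterion for $(d+2)$-vertex polytopes, which, while classical, is not stated in the paper; the short justification you give for it via an affine functional is adequate. One small point worth making explicit in a final write-up is that every $2n$-element subset of $\ddot V$ is affinely independent (this follows because the space of affine dependences is one-dimensional and the unique dependence has no zero coefficients), so that each facet really is a $(2n-1)$-simplex and each pyramid a $2n$-simplex.
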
   

\begin{proof}
	We will divide $\conv(\ddot{V})$ into $(n+1)^2$ simplices of the type 
	$$\conv(0,u_{j_1},\dots,u_{j_n},v_{k_1},\dots,v_{k_n}),$$
	which are all disjoint except at their boundaries. To this end, we must prove the following:
	\begin{enumerate}
		\item If $p \in \conv(\ddot{V})$ then it is contained in at least one of the simplices.
		\item If $p$ is a point contained in more than one of the simplices, then it is contained in a face shared by those simplices.
	\end{enumerate}
	
	First, let us prove (1). Let $p \in \conv(\ddot{V}).$ This means
	$$p = a_0u_0 + \dots + a_nu_n + b_0v_0 + \dots + b_nv_n,$$
	with $a_j,b_j \geq 0$ and 
	$$a_0 + \ldots + a_n + b_0 +\dots + b_n = 1.$$ 
	Because $V$ has Property A, there exist positive $r_j$ such that $r_0u_0 + \dots + r_nu_n = 0$ and $r_0v_0 + \dots + r_nv_n = 0$. Let $A_j=a_j/r_j$ and $B_j=b_j/r_j$. Choose $\ell$ and $k$ so that $A_{\ell} = \min\{A_j\}$ and $B_k = \min\{B_j\}$. Since
	$$0 = A_{\ell}r_0u_0 + \dots + A_{\ell}r_nu_n$$
	and
	$$0 = B_kr_0v_0 + \dots + B_kr_nv_n,$$ 
	we can subtract them both from $p$ without changing $p$. Thus we have
	$$p = \sum_{j=0}^{n} [(a_j - A_jr_j)u_j + (b_j - B_jr_j)v_j] = \sum_{j\neq \ell} [(a_j - A_{\ell}r_j)u_j] + \sum_{j\neq k} [(b_j - B_kr_j)v_j],$$
	where terms were dropped from the sum on the right hand side because $a_{\ell} - A_{\ell}r_{\ell} = b_k - B_kr_k = 0$. Also, we have that $a_j - A_{\ell}r_j \geq 0$ for each $j$ because if this is not true for some $j$, then $A_j < A_{\ell}$ which contradicts the fact that $A_{\ell}$ was the minimum. Similary, for each $j$ there is $b_j - B_kr_j \geq 0$. Since 
	$$a_0 + \dots + a_n + b_0 + \dots b_n = 1,$$ 
	it follows that 
	$$\displaystyle\sum_{j\neq\ell} (a_j - A_{\ell}r_j) + \displaystyle\sum_{j\neq k} (b_j - B_kr_j) \leq 1.$$ 
	Therefore, 
	$$p \in \conv(\{0,u_0, \dots, u_n, v_0, \dots, v_n\}\setminus \{u_{\ell},v_k\}).$$
	
	Now we prove (2). Precisely, we need to show that if 
	$$p \in \conv(\{0,u_{j_1},\dots,u_{j_n},v_{k_1},\dots,v_{k_n}\})$$ and
	$$p \in \conv(\{0,u_{\ell_1},\dots,u_{\ell_n},v_{m_1},\dots,v_{m_n}\}),$$ then
	$$p \in \conv(\{0,u_{j_1},\dots,u_{j_n},v_{k_1},\dots,v_{k_n}\}\cap \{0,u_{\ell_1},\dots,u_{\ell_n},v_{m_1},\dots,v_{m_n}\}).$$
	In particular, we need to show this for when $\{j_1,\dots,j_n\} \neq \{\ell_1,\dots,\ell_n\},$ or $\{k_1,\dots,k_n\} \neq \{m_1,\dots,m_n\},$ or both. We will consider two specific cases for the index sets to make the notation easier. For other index choices, the proof is the same. For the first case, suppose
	$$p \in \conv(\{0,u_0,\dots,u_{n-1},v_1,\dots,v_n\})$$ 
	and 
	$$p \in \conv(\{0,u_1,\dots,u_n,v_1, \dots,v_n\}).$$ 
	We want to show $p \in \conv(\{0,u_1,\dots,u_{n-1},v_1,\dots,v_n\}).$ There must exist sets of nonnegative scalars $A = \{a_j : 0 \leq j < n\},$ $B = \{b_j : 0 < j \leq n\},$ $C = \{c_j : 0 < j \leq n\},$ and $D = \{d_j : 0 < j \leq n\}$ such that 
	$$\displaystyle \sum_{j=0}^{n-1} a_j + \displaystyle \sum_{j=1}^{n} b_j \leq 1,$$ 
	$$\displaystyle \sum_{j=1}^{n} c_j + \displaystyle \sum_{j=1}^{n} d_j \leq 1,$$ 
	and
	$$p = \sum_{j=0}^{n-1} a_ju_j + \sum_{j=1}^{n} b_jv_j = \sum_{j=1}^{n} c_ju_j + \sum_{j=1}^{n} d_jv_j.$$
	Combine equations to get,
	$$0 = \sum_{j=1}^{n-1} (c_j - a_j)u_j + \sum_{j=1}^{n} (d_j - b_j)v_j + c_nu_n - a_0u_0.$$
	Since $u_0 = \displaystyle \sum_{j=1}^{n} \frac{-r_j}{r_0}u_j,$ we can say
	$$0 = \sum_{j=1}^{n-1} (c_j - a_j)u_j + \sum_{j=1}^{n} (d_j - b_j)v_j + c_nu_n - a_0\sum_{j=1}^{n} \frac{-r_j}{r_0}u_j,$$
	and then rearrange to get
	$$0 = \sum_{j=1}^{n-1} (c_j - a_j + a_0\frac{r_j}{r_0})u_j + \sum_{j=1}^{n} (d_j - b_j)v_j + (c_n + a_0\frac{r_n}{r_0})u_n.$$
	So the vectors $u_1,\dots,u_n,v_1,\dots,v_n$ have been linearly combined to get $0.$ By Lemma \ref{nozerodets}, we have $$\det(u_1,\dots,u_n,v_1,\dots,v_n) \neq 0,$$ so all the coefficients must be zero. In particular,
	$$c_n + a_0\frac{r_n}{r_0} = 0.$$
	Since $r_n,r_0 > 0$ and $c_n,a_0 \geq 0,$ it must be that $c_n = a_0 = 0.$ Therefore we can say,
	$$p = \sum_{j=1}^{n-1} a_ju_j + \sum_{j=1}^{n} b_jv_j = \sum_{j=1}^{n-1} c_ju_j + \sum_{j=1}^{n} d_jv_j,$$
	and thus $p \in \conv(\{0,u_1,\dots,u_{n-1},v_1,\dots,v_n\}).$
	
	For the second case, suppose
	$$p \in \conv(\{0,u_0,\dots,u_{n-1},v_1,\dots,v_n\})$$ 
	and 
	$$p \in \conv(\{0,u_1,\dots,u_n,v_0,\dots,v_{n-1}\}).$$
	We would like to show $p \in \conv(\{0,u_1,\dots,u_{n-1},v_1,\dots,v_{n-1}\}).$ Similar to before, we can express $p$ as
	$$p = \sum_{j=0}^{n-1} a_ju_j + \sum_{j=1}^{n} b_jv_j = \sum_{j=1}^{n} c_ju_j + \sum_{j=0}^{n-1} d_jv_j.$$
	Subtract to get
	$$0 = \sum_{j=1}^{n-1} [(c_j - a_j)u_j + (d_j - b_j)v_j] + c_nu_n + d_0v_0 - a_0u_o - b_nv_n.$$
	As before, we can use Property A to rewrite as
	$$0 = \sum_{j=1}^{n-1} [(c_j - a_j)u_j + (d_j - b_j)v_j] + c_nu_n + d_0v_0 - a_0\sum_{j=1}^{n} \frac{-r_j}{r_0}u_j - b_n\sum_{j=0}^{n-1} \frac{-r_j}{r_n}v_j,$$
	and rearrange the terms to get
	$$0 = \sum_{j=1}^{n-1} [(c_j - a_j + a_0\frac{r_j}{r_0})u_j + (d_j - b_j + b_n\frac{r_j}{r_n})v_j] + (c_n + a_0\frac{r_n}{r_0})u_n + (d_0 + b_n\frac{r_0}{r_n})v_0.$$
	Thus the vectors $u_1,\dots,u_n,v_0,\dots,v_{n-1}$ have been linearly combined to get $0.$ By Lemma \ref{nozerodets}, we have $\det(u_1,\dots,u_n,v_0,\dots,v_{n-1}) \neq 0$ and so all the coefficients must equal zero. In particular,
	$$c_n + a_0\frac{r_n}{r_0} = 0 \text{ and } d_0 + b_n\frac{r_0}{r_n} = 0.$$
	Since $r_n,r_0 > 0$ and $c_n,a_0,d_0,b_n \geq 0,$ we must have $c_n = a_0 = d_0 = b_n = 0,$ meaning we can express $p$ as
	$$p = \sum_{j=1}^{n-1} a_ju_j + \sum_{j=1}^{n-1} b_jv_j = \sum_{j=1}^{n-1} c_ju_j + \sum_{j=1}^{n-1} d_jv_j,$$
	and thus $p \in \conv(\{0,u_1,\dots,u_{n-1},v_1,\dots,v_{n-1}\}).$
\end{proof}

\begin{lemma} \label{allregdetsequalcomplex}
	Let $V = \{w_0,\dots,w_n\}$ be the vertex set of a real regular simplex. Then,
	$$|\det(u_{j_1},\dots,u_{j_n},v_{k_1},\dots,v_{k_n})| = \frac{(n+1)^{n-1}}{n^n}$$
	for all $0 \leq j_1 < j_2 < \dots < j_n \leq n$ and $0 \leq k_1 < k_2 < \dots < k_n \leq n$.
\end{lemma}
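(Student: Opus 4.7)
The plan is direct. Since $V$ is the vertex set of a \emph{real} regular simplex, by definition every $y_{j,k} = 0$, so each complex entry $z_{j,k} = x_{j,k}$ is real. Substituting this into the definitions of the associated real vectors gives
$$u_j = [x_{j,1},\dots,x_{j,n},0,\dots,0]^T \quad\text{and}\quad v_j = [0,\dots,0,x_{j,1},\dots,x_{j,n}]^T,$$
so the nonzero entries of each $u_j$ sit in the first $n$ coordinates while those of each $v_j$ sit in the last $n$ coordinates.

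Next, I would stack the vectors $u_{j_1},\dots,u_{j_n},v_{k_1},\dots,v_{k_n}$ as columns of a $2n \times 2n$ matrix. By the observation above, this matrix has block-diagonal form
$$\begin{bmatrix} X_J & 0 \\ 0 & X_K \end{bmatrix},$$
where $X_J$ is the $n \times n$ matrix whose columns are $[x_{j,1},\dots,x_{j,n}]^T$ for $j \in \{j_1,\dots,j_n\}$, and $X_K$ is defined analogously using the indices $k_1,\dots,k_n$. The determinant therefore factors as $\det(X_J)\det(X_K)$.

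Finally, since $\{[x_{j,1},\dots,x_{j,n}]^T : 0 \leq j \leq n\}$ is the vertex set of a regular simplex in $\mathbf{R}^n$ by the definition of a real regular simplex, Proposition \ref{regdetsformula} gives $|\det(X_J)| = |\det(X_K)| = \sqrt{(n+1)^{n-1}/n^n}$. Multiplying yields the claimed value $(n+1)^{n-1}/n^n$. There is no serious obstacle here; the only thing to be careful about is unwinding the definition of ``real regular simplex in $\mathbf{C}^n$'' correctly so that the block-diagonal structure becomes transparent.
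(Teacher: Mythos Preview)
Your proof is correct. It differs from the paper's argument: the paper invokes Proposition~\ref{detasproductofcomplexdets} (which requires Property~A, tacitly satisfied by a regular simplex since its centroid is the origin) to factor the $2n\times 2n$ determinant as $|\det(w_{j_1},\dots,w_{j_n})|\,|\det(w_{k_1},\dots,w_{k_n})|$, and then applies Proposition~\ref{regdetsformula} to each factor. You instead bypass Proposition~\ref{detasproductofcomplexdets} entirely by observing that in the \emph{real} regular simplex case all $y_{j,k}$ vanish, making the block-diagonal structure $\begin{bmatrix} X_J & 0 \\ 0 & X_K\end{bmatrix}$ immediate, so the factorization is trivial. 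Your route is more elementary and self-contained for this particular lemma; the paper's route reuses machinery (Proposition~\ref{detasproductofcomplexdets}) that is needed elsewhere anyway, so from the paper's point of view there is no extra cost in citing it here.
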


\begin{proof}
	By Proposition \ref{detasproductofcomplexdets}, $$|\det(u_{j_1},\dots,u_{j_n},v_{k_1},\dots,v_{k_n})| = |\det(w_{j_1},\dots,w_{j_n})||\det(w_{k_1},\dots,w_{k_n})|.$$ 
	By Proposition \ref{regdetsformula}, we have $$|\det(w_{j_1},\dots,w_{j_n})||\det(w_{k_1},\dots,w_{k_n})| = \frac{(n+1)^{n-1}}{n^n}.$$
	Therefore,
	$$|\det(u_{j_1},\dots,u_{j_n},v_{k_1},\dots,v_{k_n})| = \frac{(n+1)^{n-1}}{n^n}.\qedhere$$
\end{proof}

\begin{theorem} \label{main result}
	Let $V = \{w_0,\dots,w_n\} \subset \mathbf{C}^n$ be a set of unit vectors. Then $D(V)$ is maximized when $w_0,\dots,w_n$ are the vertices of a real regular simplex. In that case,
	$$D(V) = \sqrt{\frac{(n+1)^{n-1}}{n^n}}.$$
\end{theorem}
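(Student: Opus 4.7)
The plan is to reduce $\D(V)$ to the sum of the absolute values of the $n+1$ complex determinants in question, identify that sum with the $2n$-dimensional volume of $\conv(\ddot{V})$ by combining Lemma \ref{ptopevolumeformula} with Proposition \ref{detasproductofcomplexdets}, and then invoke the extremal property of the real regular simplex given by Theorem \ref{maxvolumecomplex}. The real regular simplex will then be shown to saturate the resulting upper bound by a direct computation using Proposition \ref{regdetsformula}.

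First I restrict to sets $V$ having Property A, which is legitimate by Remark \ref{remark}. For such $V$, Lemma \ref{ptopevolumeformula} writes $(2n)!\,\vol(\conv(\ddot V))$ as the sum of $|\det(u_{j_1},\dots,u_{j_n},v_{k_1},\dots,v_{k_n})|$ over all pairs of $n$-element subsets of $\{0,\dots,n\}$. Proposition \ref{detasproductofcomplexdets} factors each summand as $|\det(w_{j_1},\dots,w_{j_n})| \cdot |\det(w_{k_1},\dots,w_{k_n})|$, so the double sum collapses to a single square:
\[
(2n)!\,\vol(\conv(\ddot V)) \;=\; S(V)^2, \qquad S(V) := \sum_{0 \le j_1 < \dots < j_n \le n} |\det(w_{j_1},\dots,w_{j_n})|.
\]
Since $S(V)$ has $n+1$ nonnegative terms each bounded below by $\D(V)$, the minimum-versus-average inequality yields
\[
\D(V) \;\le\; \frac{S(V)}{n+1} \;=\; \frac{\sqrt{(2n)!\,\vol(\conv(\ddot V))}}{n+1}.
\]

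Next I apply Theorem \ref{maxvolumecomplex} to bound $\vol(\conv(\ddot V))$ by $\vol(\conv(\ddot R))$ for $R$ the vertex set of a real regular simplex. Specializing the explicit formula in Theorem \ref{nplus2max} to $\mathbf{R}^{2n}$ gives $\vol(\conv(\ddot R)) = (n+1)^{n+1}/(n^n(2n)!)$, and a routine simplification of $\sqrt{(2n)!\,\vol(\conv(\ddot R))}/(n+1)$ delivers the target value $\sqrt{(n+1)^{n-1}/n^n}$. Equality is then immediate when $V$ is itself a real regular simplex: Proposition \ref{regdetsformula} forces $|\det(w_{j_1},\dots,w_{j_n})| = \sqrt{(n+1)^{n-1}/n^n}$ for every $n$-subset, so the minimum coincides with the common value and every inequality in the chain becomes an equality.

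I do not anticipate a genuine technical obstacle, since the heavy lifting lives in the lemmas already established. The points to be careful about are the reduction to Property A at the start (needed for Lemma \ref{ptopevolumeformula} and Proposition \ref{detasproductofcomplexdets} to apply) and the mild contingency that $\ddot V$ might have strictly fewer than $2n+2$ distinct vertices; in that case one augments $\ddot V$ by arbitrary points of $S^{2n-1}$, which only enlarges the convex hull, so the extremal bound from Theorem \ref{nplus2max} continues to apply and the argument proceeds unchanged.
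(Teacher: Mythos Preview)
Your argument is correct and follows essentially the same route as the paper: restrict to Property~A, express $\vol(\conv(\ddot V))$ via Lemma~\ref{ptopevolumeformula}, bound it by the real regular simplex via Theorem~\ref{maxvolumecomplex}, and read off the inequality for $\D(V)$. Your factorization $(2n)!\,\vol(\conv(\ddot V))=S(V)^2$ is a slight streamlining---the paper instead takes the minimum over all $(n+1)^2$ real determinants and then invokes Proposition~\ref{detasproductofcomplexdets} to identify that minimum with $\D(V)^2$---but the content is the same; and your remark about possibly degenerate $\ddot V$ is a nice point the paper does not address explicitly.
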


\begin{proof}
	Let $R \subset \mathbf{C}^n$ be a set of unit vectors and the vertex set of a real regular simplex in $\mathbf{C}^n.$ Let $V = \{w_0,\dots,w_n\}$ be a set of unit vectors with Property A. Then by Theorem \ref{maxvolumecomplex}, we have $\vol(\conv(\ddot V)) \leq \vol(\conv(\ddot R))$. By Lemma~\ref{ptopevolumeformula},
	$$\vol(\conv(\ddot V)) = \frac{1}{(2n)!}\sum |\det(u_{j_1}.\dots,u_{j_n},v_{k_1},\dots,v_{k_n})|.$$
	It can be easily seen that $R$ has Property A. Hence, by Lemma \ref{ptopevolumeformula} and Lemma \ref{allregdetsequalcomplex}, we have
	$$\vol(\conv(\ddot R)) = \frac{(n+1)^2}{(2n)!}\frac{(n+1)^{n-1}}{n^n}.$$
	So we have
	$$\sum |\det(u_{j_1}.\dots,u_{j_n},v_{k_1},\dots,v_{k_n})| \leq (n+1)^2\frac{(n+1)^{n-1}}{n^n}.$$
	Let 
	$$d = \min \{|\det(u_{j_1}.\dots,u_{j_n},v_{k_1},\dots,v_{k_n})|\}.$$ 
	Then we have, 
	$$d \leq \frac{(n+1)^{n-1}}{n^n}.$$ 
	By Proposition \ref{detasproductofcomplexdets},
	$$|\det(u_{j_1}.\dots,u_{j_n},v_{k_1},\dots,v_{k_n})| = |\det(w_{j_1},\dots,w_{j_n})||\det(w_{k_1},\dots,w_{k_n})|.$$
	The right hand side is minimized when $\{j_1,\dots,j_n\} = \{k_1,\dots,k_n\} = \{\ell_1,\dots,\ell_n\}$, where $\{\ell_1,\dots,\ell_n\}$ is such that $D(V) = |\det(w_{\ell_1},\dots,w_{\ell_n})|$. Therefore,
	$$D(V)^2 = d \leq \frac{(n+1)^{n-1}}{n^n},$$
	and so,
	$$D(V) \leq \sqrt{\frac{(n+1)^{n-1}}{n^n}}.$$
	By Remark \ref{remark}, this is the maximum over all $V$ and not just those $V$ with Property A.
\end{proof}

\begin{corollary}
	Let $V = \{w_0,\dots,w_n\}$ be a set of $n+1$ unit vectors in $\mathbf{R}^n.$ Then $\D(V)$ is maximized when $w_0,\dots,w_n$ correspond to the vertices of a regular simplex. In that case,
	$$\D(V) = \sqrt{\frac{(n+1)^{n-1}}{n^n}}.$$
\end{corollary}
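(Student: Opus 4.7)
The plan is to deduce the real corollary as an almost immediate consequence of the complex main theorem. Every set $V = \{w_0,\ldots,w_n\}$ of unit vectors in $\mathbf{R}^n$ can be regarded as a set of unit vectors in $\mathbf{C}^n$ via the standard inclusion $\mathbf{R}^n \hookrightarrow \mathbf{C}^n$, and under this inclusion the determinants $\det(w_{j_1},\ldots,w_{j_n})$ are exactly the same real numbers whether one computes them in $\mathbf{R}^n$ or in $\mathbf{C}^n$. In particular, $\D(V)$ has the same value in both interpretations, so the upper bound supplied by Theorem~\ref{main result} applies without change.

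Explicitly, first I would note that for $V \subset \mathbf{R}^n \subset \mathbf{C}^n$, Theorem~\ref{main result} yields
\[
\D(V) \leq \sqrt{\frac{(n+1)^{n-1}}{n^n}}.
\]
Next I would exhibit a configuration attaining this bound. Let $R = \{v_0,\ldots,v_n\} \subset \mathbf{R}^n$ be a set of unit vectors at the vertices of a regular simplex inscribed in $S^{n-1}$ (the existence of such a configuration is standard). By Proposition~\ref{regdetsformula} we have $|\det(v_{j_1},\ldots,v_{j_n})| = \sqrt{(n+1)^{n-1}/n^n}$ for every choice of $n$ indices, so $\D(R) = \sqrt{(n+1)^{n-1}/n^n}$, matching the upper bound.

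Finally, I would observe that a regular simplex in $\mathbf{R}^n$ is, by the paper's definition, precisely the vertex set of a real regular simplex in $\mathbf{C}^n$, so the optimum in the real case is achieved by exactly the configuration predicted by the complex theorem. This closes the argument and gives the stated value $\sqrt{(n+1)^{n-1}/n^n}$ for the maximal $\D(V)$. There is no real obstacle here; the work has already been done in the complex case, and the only thing to verify is the compatibility of the inclusion $\mathbf{R}^n \hookrightarrow \mathbf{C}^n$ with the quantities being optimized, which is essentially a tautology.
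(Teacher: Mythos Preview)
Your proposal is correct and follows essentially the same route as the paper: deduce the real upper bound directly from Theorem~\ref{main result} via the inclusion $\mathbf{R}^n\hookrightarrow\mathbf{C}^n$, and note that the regular simplex attains it. The paper's version is simply terser (one sentence of contradiction), while you spell out the attainment step using Proposition~\ref{regdetsformula}.
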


\begin{proof}
	If there were some set $V$ which has a greater $\D(V),$ then this would contradict Theorem \ref{main result}.
\end{proof}

\section{Some Motivation and Further Cases} \label{extras}

\subsection{Motivation from Complex Function Theory}

Recall Picard's Theorem.

\begin{theorem*}[{Picard's Theorem \cite[\S8.3]{Ahlfors}}]\label{picardthm}
	If a meromorphic function never takes on any of the three values $0,$ $1,$
	and $\infty,$ then the meromorphic function must be constant.
\end{theorem*}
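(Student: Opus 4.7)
The plan is to reduce the statement to the Little Picard form for entire functions and then pass through the elliptic modular function $\lambda$. First I would observe that a meromorphic function on $\mathbf{C}$ that never attains the value $\infty$ has no poles, hence is entire. So it suffices to show that an entire function $f \colon \mathbf{C} \to \mathbf{C} \setminus \{0,1\}$ must be constant.

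The key tool is the classical elliptic modular function $\lambda \colon \mathbf{H} \to \mathbf{C} \setminus \{0,1\}$ from the upper half-plane, which is a holomorphic universal covering map onto its image. Constructing $\lambda$ — e.g., via the Weierstrass $\wp$-function attached to the lattice $\mathbf{Z} + \tau \mathbf{Z}$ and a suitable cross-ratio of the half-period values $e_1, e_2, e_3$ — and checking that it is in fact a covering map (with deck group the principal congruence subgroup $\Gamma(2) \subset \mathrm{PSL}_2(\mathbf{Z})$) is the technical heart of the argument and is where I expect the real work to lie. I would follow Ahlfors and build a fundamental domain for $\Gamma(2)$, verify by reflection arguments that $\lambda$ maps it conformally onto a half-plane, and then confirm via monodromy that the values $0, 1, \infty$ are the only omitted ones on $\mathbf{H}$.

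Once $\lambda$ is in hand, the proof finishes quickly. Since $\mathbf{C}$ is simply connected and $f$ omits $0$ and $1$, the standard lifting criterion for covering maps produces a holomorphic lift $\tilde f \colon \mathbf{C} \to \mathbf{H}$ satisfying $\lambda \circ \tilde f = f$. Composing with a biholomorphism $\mathbf{H} \to \mathbf{D}$, for instance $z \mapsto (z-i)/(z+i)$, yields a bounded entire function, which by Liouville's theorem is constant; consequently $\tilde f$ and therefore $f$ are constant as well. The main obstacle, as noted, is constructing the modular function and establishing the covering property; the lifting-and-Liouville endgame is routine. An alternative route would be to deploy Schottky's theorem or Bloch's theorem to produce an absolute bound on $|f|$ directly from the omitted values, bypassing $\lambda$, but either way the deep input is an a priori estimate on functions avoiding two values, and that is the step that carries the content of the theorem.
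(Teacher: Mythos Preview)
Your outline is correct and is precisely the classical argument in Ahlfors \S8.3, which is the reference the paper cites. The paper itself does not supply a proof of Picard's Theorem; it merely states it as background motivation and defers to Ahlfors, so there is nothing further to compare.
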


To make a connection with the maximin determinants problem, Picard's Theorem can be
reformulated as follows. Let $F(z)=(f_0(z),f_1(z))$ be a vector valued
function with coordinate functions $f_0$ and $f_1.$ This vector valued
function is an alternate representation for the meromorphic function
$f_1/f_0.$ Consider the three unit vectors $$
v_0=(1,0),\qquad v_1=(0,1),\qquad\textnormal{and}\qquad
v_2=\left(\frac{1}{\sqrt{2}},-\frac{1}{\sqrt{2}}\right)
$$
in $\mathbf{C}^2.$ Then, if the meromorphic function $f_1/f_0$ omits
the values $0,$ $1,$ and $\infty,$ then the following three dot products
$$
v_0\cdot F = f_0, \qquad
v_1\cdot F = f_1, \qquad\textnormal{and}\qquad
v_2\cdot F = f_0-f_1
$$
never vanish. Thus, Picard's Theorem can be reformulated by saying that
if none of the three dot products vanish, then $f_1/f_0$ must be constant.
This statement was generalized by Bloch and Cartan to higher dimensions.
\begin{theorem*}[{\cite[Th~3.10.6]{Kobayashi}}]Let $F(z)=(f_0(z),\dots,f_n(z))$
	be a $\mathbf{C}^{n+1}$ valued function of a complex variable $z,$
	and assume that $F(z)$ is never the zero vector.
	Let $v_0,\dots,v_{2n}$ be $2n+1$ unit vectors in $\mathbf{C}^{n+1}$ such
	that any $n+1$ of them are linearly independent. If the $2n+1$
	dot products $v_0\cdot F,$ \dots, $v_{2n}\cdot F$ are all never zero,
	then there is a single complex valued function $f(z)$ and complex 
	constants $c_0,\dots,c_n$ such that $f_j=c_jf$ for $j=0,\dots,n.$
\end{theorem*}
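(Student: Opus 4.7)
The plan is to show that $F(z)$ always lies in a fixed one-dimensional complex subspace of $\mathbf{C}^{n+1}$; parameterizing that subspace as $\mathbf{C}\cdot(c_0,\dots,c_n)$ and writing $F(z)=f(z)(c_0,\dots,c_n)$ immediately yields $f_j=c_jf$. I work under the standard assumption that $F$ is entire, which is the natural setting given the connection to Picard's theorem.

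Set $h_j=v_j\cdot F$ for $j=0,\dots,2n$. Each $h_j$ is entire and, by hypothesis, nowhere-zero. Because $v_0,\dots,v_{2n}$ live in an $(n+1)$-dimensional space, every $(n+2)$-subset of them admits a nontrivial linear dependence; and because any $n+1$ of them are linearly independent, every such dependence has \emph{all} coefficients nonzero. Dotting against $F$ produces identities of the form
$$\sum_{k=0}^{n+1}\alpha_k\,h_{j_k}\equiv 0,\qquad \alpha_k\neq 0,$$
one for each choice of $n+2$ indices. This is the algebraic input.

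The analytic input is Borel's theorem on nowhere-zero entire functions: if $g_0+\cdots+g_m\equiv 0$ with each $g_i$ entire and nowhere-zero, the index set partitions into classes inside which the $g_i$ are pairwise constant multiples of one another, and whose sums vanish. Applied to each of our relations, Borel forces proportionality among certain subsets of the $h_j$. The main obstacle is combinatorial: one must combine the partitions coming from different $(n+2)$-subsets to conclude that \emph{every} ratio $h_j/h_0$ is a constant $\lambda_j$. I would handle this by induction on $n$, with Picard's theorem (as reformulated in the introduction to this section) giving the base case $n=1$. In the inductive step, a non-constant ratio would, through the Borel partition structure, produce a nonzero $w\in\mathbf{C}^{n+1}$ with $w\cdot F\equiv 0$; then $F$ would factor through $\mathbf{C}^{n+1}/\mathbf{C}w\cong\mathbf{C}^{n}$, and the projections of the $v_j$ would form a general-position family of $2n+1>2(n-1)+1$ vectors in that quotient, contradicting the inductive hypothesis.

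Once every ratio $h_j/h_0=\lambda_j$ is a constant, consider the linear map
$$T\colon\mathbf{C}^{n+1}\to\mathbf{C}^{2n+1},\qquad T(x)=(v_0\cdot x,\,v_1\cdot x,\dots,v_{2n}\cdot x).$$
The general-position hypothesis ensures $n+1$ of the $v_j$ span $\mathbf{C}^{n+1}$, so $T$ is injective. The condition $h_j=\lambda_j h_0$ says $T(F(z))$ lies on the fixed line $\mathbf{C}\cdot(1,\lambda_1,\dots,\lambda_{2n})$ for every $z$, and the preimage of this line under the injection $T$ is a single complex line $L\subset\mathbf{C}^{n+1}$. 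Writing $L=\mathbf{C}\cdot(c_0,\dots,c_n)$ and $F(z)=f(z)(c_0,\dots,c_n)$ gives the stated conclusion.
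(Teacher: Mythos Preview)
The paper does not prove this theorem. It appears in Section~\ref{extras} purely as motivational background, stated with a citation to Kobayashi \cite[Th.~3.10.6]{Kobayashi} and no argument supplied; the paper's own results are about the maximin determinants problem, and the Bloch--Cartan theorem is quoted only to explain why that problem is of interest. So there is no proof here to compare your proposal against.

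On the proposal itself: feeding the linear relations among the nowhere-vanishing $h_j$ into Borel's lemma is the standard route, and your final linear-algebra step via the injectivity of $T$ is correct. The inductive step, however, is only gestured at, and one claim in it is not true as stated. When you pass to the quotient $\mathbf{C}^{n+1}/\mathbf{C}w$, the images of the $v_j$ need \emph{not} remain in general position. Indeed, if $w$ arises from a two-element Borel class, say $w=\alpha_0 v_{j_0}+\alpha_1 v_{j_1}$, then $\bar v_{j_0}$ and $\bar v_{j_1}$ become proportional in the quotient, so any $n$-subset containing both of them is dependent. You therefore cannot directly invoke the inductive hypothesis on the full collection of images; some additional bookkeeping is required (discarding redundant images while retaining enough vectors, and checking general position for what remains). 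The references already cited in the paper (Kobayashi \cite{Kobayashi}, Lang \cite{Lang}) carry out this combinatorics.
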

\par
Picard's Theorem and its generalization by Bloch and Cartan are theorems about
entire functions. A principle formulated by A.~Bloch
(see \cite[Ch.~VIII]{Lang}) says that to each such theorem about entire 
functions, there should be a corresponding theorem for functions analytic
in the unit disc. The following theorem of Landau is the analog of the
Picard Theorem.
\begin{theorem*}[{Landau's Theorem \cite{Landau}}]If $f$ is analytic
	in the unit disc and
	$f$ never takes on the values $0$ or $1,$ then $|f'(0)|$ can be explicitly
	bounded in terms of $|f(0)|.$
\end{theorem*}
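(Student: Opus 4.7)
The plan is to prove Landau's theorem by passing to the universal cover of $\mathbf{C} \setminus \{0,1\}$ and invoking the Schwarz--Pick lemma. The key point is that $\mathbf{C} \setminus \{0,1\}$ carries a complete hyperbolic (Poincar\'e) metric, inherited from the fact that its universal cover is the upper half plane $\mathbf{H}$, and any holomorphic map from the unit disc into $\mathbf{C} \setminus \{0,1\}$ must be distance-decreasing for the hyperbolic metrics. The bound on $|f'(0)|$ will then be read off from the hyperbolic density on $\mathbf{C} \setminus \{0,1\}$ evaluated at $f(0)$.

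First I would invoke the classical elliptic modular function $\lambda\colon \mathbf{H} \to \mathbf{C} \setminus \{0,1\}$, a holomorphic covering map of the upper half plane onto $\mathbf{C} \setminus \{0,1\}$. Since the unit disc $\mathbf{D}$ is simply connected and $f$ misses $0$ and $1$, the standard covering lifting lemma produces a holomorphic $\tilde{f}\colon \mathbf{D} \to \mathbf{H}$ with $f = \lambda \circ \tilde{f}$. Choosing the lift so that $\tilde{f}(0)$ lies in a fixed fundamental domain for the deck transformation group (the principal congruence subgroup of level $2$) makes $\tilde{f}(0)$ a well-defined function of $f(0)$.

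Next, the Schwarz--Pick lemma applied to $\tilde{f}\colon \mathbf{D} \to \mathbf{H}$ yields
$$|\tilde{f}'(0)| \le 2\,\operatorname{Im}(\tilde{f}(0)).$$
Differentiating $f = \lambda \circ \tilde{f}$ at $0$ gives $f'(0) = \lambda'(\tilde{f}(0))\,\tilde{f}'(0)$, hence
$$|f'(0)| \le 2\,|\lambda'(\tilde{f}(0))|\,\operatorname{Im}(\tilde{f}(0)),$$
an explicit function of $f(0)$; equivalently $|f'(0)| \le 2/\rho(f(0))$, where $\rho$ is the hyperbolic density on $\mathbf{C} \setminus \{0,1\}$. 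Taking the supremum of $2/\rho(w)$ over the circle $|w| = |f(0)|$ (which is finite since $1/\rho$ is continuous and vanishes at the punctures $0, 1, \infty$) yields a bound depending only on $|f(0)|$, as stated.

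The main obstacle is not the Schwarz--Pick step, which is essentially formal once a suitable metric is available, but rather the construction of $\lambda$, which is a non-trivial piece of classical modular function theory. A more self-contained alternative I might prefer is to build an ultrahyperbolic conformal metric on $\mathbf{C} \setminus \{0,1\}$ by hand, using $|w|$, $|w-1|$ and logarithmic correction factors, and then apply the Ahlfors--Schwarz lemma directly; this avoids $\lambda$ at the cost of inventing an explicit metric and verifying that its Gaussian curvature is bounded above by $-1$. Either path produces the explicit density needed for the theorem.
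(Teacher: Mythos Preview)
The paper does not prove Landau's Theorem; it merely states it with a citation to Landau's 1904 paper and uses it as historical motivation in the discussion of Section~\ref{extras}. There is therefore no proof in the paper to compare your attempt against.

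Your sketch is nonetheless correct and is the standard modern argument: lift $f$ through the modular covering $\lambda\colon\mathbf{H}\to\mathbf{C}\setminus\{0,1\}$, apply Schwarz--Pick to $\tilde f$, and read off the bound via the hyperbolic density $\rho$ on $\mathbf{C}\setminus\{0,1\}$. One small simplification: once you have $|f'(0)|\le c/\rho(f(0))$ (for whichever constant $c$ your normalization of $\rho$ produces), the detour through a fixed fundamental domain is unnecessary, since $\rho(f(0))$ already depends only on $f(0)$ and not on the choice of lift. The final passage from a bound in terms of $f(0)$ to one in terms of $|f(0)|$ is fine: $f$ omits $0$, so $|f(0)|>0$, and $1/\rho$ extends continuously by $0$ to the punctures $0,1,\infty$, so the supremum of $1/\rho$ over the circle $|w|=|f(0)|$ is finite. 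Your alternative via an explicit ultrahyperbolic metric and the Ahlfors--Schwarz lemma would also work and is equally standard.
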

Cherry and Eremenko \cite{ChEr} gave a Landau-type counterpart to the
higher dimensional result of Bloch and Cartan. In that work, they gave
an explicit derivative estimate consisting of two factors. One factor is
a constant depending only on the dimension $n.$ The other factor is a 
geometric factor depending on the configuration of the unit vectors
$v_0,\dots,v_{2n}.$ The connection to this paper is that the geometric
factor is closely related to the minimum absolute value of the determinants
of the vectors taken $n+1$ at a time. Cherry and Eremenko's bound,
although explicit, is almost
certainly far from optimal. In particular, it is not clear if the factor
depending on dimension alone is necessary. As an initial foray into 
investigating this, Cherry asked if one could find configurations of
$2n+1$ unit vectors in $\mathbf{C}^{n+1}$ in such a way so that as
the dimension $n$ tends to infinity, the minimum absolute value of the
various determinants stays bounded away from zero. Theorem~\ref{main result} shows that no such configuration is possible, because
$$\lim_{n \to \infty} \sqrt{\frac{(n+1)^{n-1}}{n^n}} = 0.$$
This means that to investigate whether or not the dimension only factor is
needed, one needs to find examples of vector valued functions 
in higher and higher dimensions whose
derivatives grow faster than the geometric factor
alone in the Cherry and Eremenko theorem allows.

\subsection{$k+1$ Vectors in $\mathbf{R}^2$} \label{real2d}

In this section we see the solution to the maximin determinants problem for $k+1$ vectors in $\mathbf{R}^2,$ for arbitrary $k+1>n.$ Define $\theta_{i,j}$ as the angle from $v_i$ to $v_j$ in radians, measured in the counterclockwise direction. Let $v_0,\dots,v_k \in \mathbf{R}^2$ be vertices of a convex $p$-gon inscribed in $S^1,$ with $p>k.$ Say $v_0,\dots,v_k$ are \textit{consecutive vertices} if $v_{j+1}$ is the vertex of the $p$-gon adjacent to $v_j$ in the counterclockwise direction for all $0 \leq j < k.$

\begin{theorem}
	Let $V = \{v_0,\dots,v_k\} \subset \mathbf{R}^2$ be a set of unit vectors. Then $D(V)$ is maximal when $v_0,\dots,v_k$ are consecutive vertices of a regular $(2k+2)$-gon inscribed in $S^1,$ and for that maximal configuration 
	$$D(V) = \sin[\pi/(k+1)].$$ 
\end{theorem}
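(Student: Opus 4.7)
The plan is to reduce the two-dimensional problem to a one-dimensional question about points on a circle of circumference $\pi$. Writing $v_j = (\cos\alpha_j, \sin\alpha_j)$, we have $|\det(v_i, v_j)| = |\sin(\alpha_i - \alpha_j)|$. This quantity is $\pi$-periodic in each $\alpha_j$, so it depends only on the projected points $\beta_j = \alpha_j \bmod \pi$, which live on the quotient circle $\mathbf{R}/\pi\mathbf{Z}$ of circumference $\pi$. More precisely, $|\det(v_i, v_j)| = \sin(d_{ij})$, where $d_{ij} \in [0, \pi/2]$ is the shorter arc-length distance between $\beta_i$ and $\beta_j$ on this quotient circle.

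I would then prove the upper bound $D(V) \leq \sin[\pi/(k+1)]$ by an averaging argument. If two of the $\beta_j$'s coincide then some determinant vanishes, so I may assume the $k+1$ projected points are distinct. Sorting them yields $k+1$ consecutive gaps that sum to $\pi$, so by pigeonhole the minimum gap $g^{\ast}$ satisfies $g^{\ast} \leq \pi/(k+1) \leq \pi/2$ (using $k+1 \geq 3$). For the pair of adjacent points realizing this smallest gap, the short-arc distance equals $g^{\ast}$, so $\min_{i \neq j} d_{ij} \leq g^{\ast} \leq \pi/(k+1)$. Since $\sin$ is increasing on $[0, \pi/2]$, this yields $D(V) = \sin(\min_{i \neq j} d_{ij}) \leq \sin[\pi/(k+1)]$.

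For the matching lower bound I would simply compute with the candidate configuration: taking $v_j = (\cos(j\pi/(k+1)), \sin(j\pi/(k+1)))$ for $0 \leq j \leq k$ (which are consecutive vertices of a regular $(2k+2)$-gon inscribed in $S^1$) gives $|\det(v_i, v_j)| = |\sin((i-j)\pi/(k+1))|$. As $|i-j|$ ranges over $\{1, \ldots, k\}$ this attains its minimum, $\sin[\pi/(k+1)]$, at $|i-j|=1$ and $|i-j|=k$. Combined with the upper bound, this shows the configuration is optimal.

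The argument is largely routine and no step poses a serious obstacle. The only point requiring mild care is the book-keeping when passing from $\alpha_j \in \mathbf{R}/2\pi\mathbf{Z}$ down to $\beta_j \in \mathbf{R}/\pi\mathbf{Z}$: verifying that the determinant magnitudes genuinely descend to the quotient, and that the minimum pairwise arc-distance on the quotient circle coincides with the minimum consecutive gap (which uses precisely the estimate $g^{\ast} \leq \pi/2$).
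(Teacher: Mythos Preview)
Your proof is correct and follows essentially the same approach as the paper: the paper multiplies vectors by $-1$ to place them in the upper half-plane (your projection to $\mathbf{R}/\pi\mathbf{Z}$), sorts them, and observes that the consecutive angular gaps together with the wraparound term $\pi-\theta_{0,k}$ sum to $\pi$, which is exactly your gap--pigeonhole argument on the quotient circle. Your quotient-circle phrasing is slightly cleaner and you are a bit more explicit about the degenerate case where two projected points coincide, but the underlying argument is the same.
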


\begin{proof}
	Let $V = \{v_0,\dots,v_k\} \subset \mathbf{R}^2$ be unit vectors. Observe that $|\det(v_i,v_j)| = |\sin\theta_{i,j}|$ for all $0 \leq i < j \leq k.$ So we will maximize 
	$$D(V) = \min\{|\sin(\theta_{i,j})|:0 \leq i < j \leq k\}.$$
	
	Without loss of generality, assume $v_0$ lies on the $x$ axis. Assume all the $v_j$ lie in the first or second quadrant (if some $v_j$ does not, then multiply it by $-1$ which will not change any of the determinant magnitudes). If they are not already, relabel $v_0,\dots,v_k$ so they are consecutive vertices of $\conv(V).$ In summary, if $k=5$ then we assume a configuration like in figure \ref{figure3}.
	
	\begin{figure}
		\centering
		\includegraphics[width=0.6\textwidth]{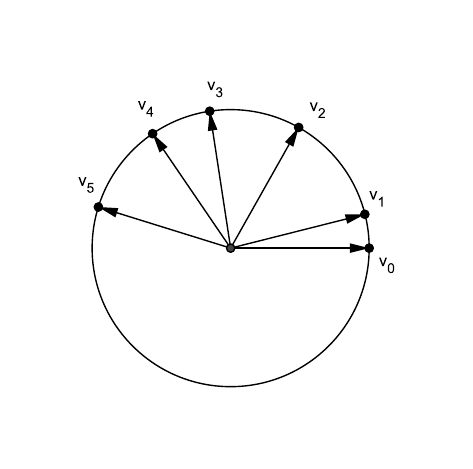}
		\caption{}\label{figure3}	
	\end{figure}
	
	We have 
	$$\theta_{0,1} + \theta_{1,2} + \dots + \theta_{k-1,k} + (\pi - \theta_{0,k}) = \pi.$$
	This implies the minimum of $\theta_{0,1},\dots,\theta_{k-1,k}$ and $\pi - \theta_{0,k}$ is less than or equal to $\pi/(k+1),$ which gives the minimum of $\sin(\theta_{0,1}),\dots,\sin(\theta_{k-1,k}),$ and $\sin(\theta_{0,k})$ is less than or equal to $\sin[\pi/(k+1)].$ This means that for any configuration $V,$ we have $D(V) \leq \sin[\pi/(k+1)].$
	
	If $V = \{v_0,\dots,v_k\}$ is such that 
	$$\theta_{0,1} = \dots = \theta_{k-1,k} = \pi - \theta_{0,k} = \pi/(k+1),$$ 
	then 
	$$\sin(\theta_{0,1}) = \dots = \sin(\theta_{k-1,k}) = \sin(\theta_{0,k}) = \sin[\pi/(k+1)].$$ 
	Further, $\pi/(k+1) \leq \theta_{i,j} \leq \pi - \pi/(k+1)$ for all $0 \leq i < j \leq k$ which implies $\sin(\theta_{i,j}) \geq \sin[\pi/(k+1)]$ for all $0 \leq i < j \leq k.$ Thus, for this configuration $V,$ we have $D(V) = \sin[\pi/(k+1)].$ Since we have shown that for any $V,$ there is $D(V) \leq  \sin[\pi/(k+1)],$ we have maximized $D(V).$ Since $\theta_{j,j+1} = \pi/(k+1)$ for all $0 \leq j < k,$ the vectors of $V$ are $k$ consecutive vertices of the regular $(2k +2)$-gon inscribed in $S^1.$
\end{proof}

Note that when $k$ is even and $v_0,\dots,v_k$ are the vertices of the regular $(k+1)$-gon, flipping the $v_j$ so they all lie in the first or second quadrant produces $k+1$ consecutive vertices of the regular $(2k+2)$-gon. This is as expected, since the solution to the problem of $n+1$ unit vectors in $\mathbf{R}^n$ said the equaliteral triangle maximizes $D(V)$ for $n=2$ and $k+1=3.$

\subsection{The Spherical Code Problem and $k+1$ Vectors in $\mathbf{C}^2$}

How should one place $k+1$ points on the surface of $S^2$ so the minimum distance among all pairs of points is maximized, and what is this distance? This is a classical problem in geometry, which we call the spherical code problem. The solution is known for some small values of $k+1,$ but is open in general. For more information, see \cite{Weisstein} or \cite{Sloane}.   

There is a connection between the maximin determinants problem for $k+1$ vectors in $\mathbf{C}^2$ and the spherical code problem for $k+1$ points on $S^2$. It can be seen that the absolute value of the determinant of two unit vectors in $\mathbf{C}^2$ is equal to the distance between two points on a sphere in $\mathbf{R}^3$ of radius $1/2$, where those points are obtained by stereographic projection from representatives from the two complex vectors when viewed as points on the complex projective line. For the details of this, see \cite[Pg 14]{ChYe} and its errata. As a result, a solution to one problem entails a solution to the other. Since the spherical code problem is unsolved and considered hard for most values of $k+1,$ the maximin determinants problem in $\mathbf{C}^2$ is probably also hard for most values of $k+1.$

\subsection{More than $n+1$ Vectors in $\mathbf{R}^n$}

To solve the problem for $n+1$ unit vectors in $\mathbf{C}^n,$ we maximized the minimum of a certain set of determinants coming from $2n+2$ unit vectors in $\mathbf{R}^{2n}.$ This did not solve the maximin determinants problem for $2n+2$ vectors in $\mathbf{R}^{2n}$ because we did not consider the minimum over all possible determinants of the $2n+2$ vectors, and because we only considered special configurations of vectors coming from the $n+1$ vectors in $\mathbf{C}^n.$ So, there is still work to be done for the case of $n+2$ vectors in $\mathbf{R}^n.$

One strategy for values of $k>n$ is to attempt to generalize the methods used for $k+1$ vectors in $\mathbf{R}^2.$ As seen in subsection \ref{real2d}, the optimal configuration of $k+1$ vectors in $\mathbf{R}^2$ is $k+1$ consecutive vertices of a regular $(2k+2)$-gon. We might then hope that in three dimensions the optimal $k+1$ vectors would come from some special polyhedron with $2k+2$ vertices.

As a matter of fact, if $v_0,v_1,v_2$ and $v_3$ are the vertices of a regular tetrahedron, then $v_0,v_1,v_2,v_3,-v_0,-v_1,-v_2,$ and $-v_3$ are the vertices of a cube. This leads to the following conjecture.

\begin{conjecture*}
	For $6$ unit vectors in $\mathbf{R}^3,$ an optimal configuration is the $6$ vertices of an icosahedron contained in the northern hemisphere if one of the vertices lies at the north pole.
\end{conjecture*}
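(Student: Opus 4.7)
The plan is to extend the volume-bound strategy of Theorem \ref{main result} to this real three-dimensional setting. Given six unit vectors $V = \{v_0,\dots,v_5\} \subset \mathbf{R}^3$, the first move is to pass to the centrally symmetric set $\hat V = V \cup (-V)$ of twelve unit vectors. Because replacing any $v_j$ by $-v_j$ leaves every quantity $|\det(v_{i_1},v_{i_2},v_{i_3})|$ invariant, no information is lost; moreover $\conv(\hat V)$ automatically contains the origin, giving a clean substitute for the role played by Property~A in the complex case (which is not naturally available for six vectors in $\mathbf{R}^3$).

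Following the template of Lemma \ref{ptopevolumeformula}, one would decompose $\conv(\hat V)$ into tetrahedra with apex at the origin, one for each face of the polytope, and use Proposition \ref{simplex volume formula} to express $\vol(\conv(\hat V))$ as a sum of terms $|\det(w_i,w_j,w_k)|$ with $w_i,w_j,w_k \in \hat V$. Because $|\det(\epsilon_1 v_i,\epsilon_2 v_j,\epsilon_3 v_k)| = |\det(v_i,v_j,v_k)|$ for any signs $\epsilon_\ell \in \{\pm 1\}$, each summand can be rewritten in terms of a triple drawn from $V$ itself, weighted by the number of signed variants that appear as faces of $\conv(\hat V)$.

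The key extremal input required is the statement that, among all convex polytopes with twelve vertices inscribed in $S^2$, the regular icosahedron has maximum volume; this would play the role that Theorem \ref{nplus2max} played in the main proof. Combined with the volume identity and the elementary inequality ``minimum $\le$ weighted average'', this should produce an upper bound on $\D(V)$. One then attempts to match it by direct computation on the six northern-hemisphere vertices of the icosahedron, for which the pairwise inner products take only the values $1/\sqrt{5}$ (on icosahedron edges) and $-1/\sqrt{5}$ (on ``anti-edges''), so the twenty relevant determinants $|\det(v_i,v_j,v_k)|$ fall into a small number of explicitly computable classes.

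The main obstacle, and almost certainly why the statement is posed as a conjecture, is that these twenty determinants are \emph{not} all equal: triples forming a face of the icosahedron produce a strictly smaller value than triples spread around the pentagonal ring. Consequently, a pure averaging argument in the spirit of Lemma \ref{allregdetsequalcomplex} will give a bound strictly larger than the minimum actually achieved, so the averaging step loses information. Closing this gap would seem to require a refined, weighted volume identity reflecting the icosahedral face structure, or a perturbation argument showing that any deformation of the conjectured configuration strictly decreases the minimum face determinant, or perhaps a separate route through the Tammes problem for twelve points on $S^2$ (whose solution \emph{is} the icosahedron) converted into a determinant inequality. I expect this gap between the averaged bound and the pointwise minimum to be the crux of the problem.
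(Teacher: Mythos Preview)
The paper contains no proof of this statement: it is stated as a \emph{Conjecture} and left open. There is therefore nothing to compare your proposal against.

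That said, your assessment is accurate and well-calibrated. You correctly identify that the natural extension of the volume-averaging method of Theorem~\ref{main result} breaks down here precisely because the twenty determinants $|\det(v_i,v_j,v_k)|$ arising from the six northern icosahedral vertices are not all equal, so the analogue of Lemma~\ref{allregdetsequalcomplex} fails and the averaging inequality overshoots the actual minimum. Your diagnosis that this gap is ``almost certainly why the statement is posed as a conjecture'' is consistent with the paper's own treatment: the authors offer no argument beyond the observation that the tetrahedron-to-cube pattern in $\mathbf{R}^3$ suggests the icosahedron as the next case. Your suggested alternative routes (a weighted face-structure identity, a local perturbation argument, or leveraging the Tammes solution for twelve points) are reasonable directions, but none is pursued in the paper, and the conjecture remains open there.
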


Similarly, we may ask the following.

\begin{question*}
	For $10$ unit vectors in $\mathbf{R}^3,$ is an optimal configuration the $10$ vertices in the northern hemisphere of a dodecahedron if one of the vertices lies at the north pole? 
\end{question*}

We might also ask if the optimal $k+1$ vectors could be vertices of a polyhedron with $2k+2$ vertices which either has maximal volume over all polyhedra with $2k+2$ vertices inscribed in the unit sphere, or which solves the spherical code problem for $2k+2$ points. Considering $k=3$ again, the $8$ vectors which maximize volume and the $8$ vectors which solve the spherical code problem are known, and can be found, for instance, in \cite{Sloane}. The configurations which answer each problem are not the same. In either case, however, one can compute the determinants and see that it is not possible to choose $4$ of the $8$ vectors so that the absolute values of the determinants are equal to those of the regular tetrahedron.



\end{document}